\newtheorem{theorem}{Theorem}[section]
\newaliascnt{lemma}{theorem}
\newtheorem{lemma}[lemma]{Lemma}
\newaliascnt{proposition}{theorem}
\newtheorem{proposition}[proposition]{Proposition}
\newaliascnt{corollary}{theorem}
\newtheorem{corollary}[corollary]{Corollary}
\newaliascnt{claim}{theorem}
\newtheorem*{theorem*}{Theorem}
\theoremstyle{definition}
\newaliascnt{definition}{theorem}
\newtheorem{definition}[definition]{Definition}
\newtheorem*{definition*}{Definition}
\newaliascnt{notation}{theorem}
\newaliascnt{remark}{theorem}
\newtheorem{remark}[remark]{Remark}
\newaliascnt{example}{theorem}
\newtheorem{example}[example]{Example}
\newaliascnt{remarks}{theorem}
\newtheorem{remarks}[remarks]{Remarks}
\numberwithin{equation}{section}
\newcommand{\sbt}{\,\begin{picture}(-1,1)(-1,-2.5)\circle*{3}\end{picture}\,\,\, }
\begin{document}
\def\C{\mathbb{C}}
\def\M{\mathrm{M}}
\def\Cstar{\mathrm{C}^*}
\def\CPCstar{\mathrm{CPC}^*}
\def\Lim{\overset{\tikz \draw [arrows = {->[harpoon]}] (-1,0) -- (0,0);}{(F_n,\rho_n)}_n}
\def\Lima{\overset{\tikz \draw [arrows = {->[harpoon]}] (-1,0) -- (0,0);}{(A_n,\rho_n)}_n}
\def\Limar{\overset{\tikz \draw [arrows = {->[harpoon]}] (-2.25,0) -- (0,0);}{(\M_r(A_n),\rho^{(r)}_n)}_n}
\def\Limaj{\overset{\tikz \draw [arrows = {->[harpoon]}] (-1.25,0) -- (0,0);}{(A_{n_j},\hat{\rho}_{n_j})}_j}
\def\Limj{\overset{\tikz \draw [arrows = {->[harpoon]}] (-1.25,0) -- (0,0);}{(F_{n_j},\hat{\rho}_{n_j})}_j}

\title{Completely positive approximations and inductive systems}
 \author[K.\ Courtney]{Kristin Courtney}
     \address{Mathematical Institute, University of M\"unster, Einsteinstr.\ 62, 48149 M\"unster, Germany}
     \email{kcourtne@uni-muenster.de}
     \subjclass[2010]{46L05, 47L40} 
 \keywords{Completely positive approximation, nuclearity, inductive limits, completely positive maps, complete order isomorphism, amenable discrete groups}
 \thanks{
This research was supported by the Deutsche Forschungsgemeinschaft (DFG, German Research Foundation) under Germany's Excellence Strategy -- EXC 2044 -- 390685587, Mathematics M\"unster -- Dynamics -- Geometry -- Structure, the Deutsche Forschungsgemeinschaft (DFG, German Research Foundation) -- Project-ID 427320536 -- SFB 1442, and ERC Advanced Grant 834267 -- AMAREC}
\date{\today}

\begin{abstract}
We consider inductive systems of $\Cstar$-algebras with completely positive contractive connecting maps. We define a condition, called $\Cstar$-encoding, which is sufficient for the limit of the system to be completely order isomorphic to a $\Cstar$-algebra and hence guarantees a unique $\Cstar$-algebra associated to the limit. When the system consists of finite-dimensional $\Cstar$-algebras, this condition is also necessary and thus characterizes when the limit is completely order isomorphic to a (nuclear) $\Cstar$-algebra. $\Cstar$-encoding systems generalize the NF systems of Blackadar and Kirchberg and the $\CPCstar$-systems of the author and Winter. Moreover, any system of completely positive approximations of a nuclear $\Cstar$-algebra gives rise to a $\Cstar$-encoding system. Consequently a separable $\Cstar$-algebra is nuclear if and only if it is completely order isomorphic to the limit of a $\Cstar$-encoding system. This gives an inductive limit description of all separable nuclear $\Cstar$-algebras equivalent to the recent construction of the author and Winter but without the additional structure of order zero maps. Without these extra structural requirements, one can easily construct examples of our systems, which we demonstrate for all amenable group $\Cstar$-algebras. 
\end{abstract}

\maketitle

\section*{Introduction}
\renewcommand*{\thetheorem}{\Alph{theorem}}
\renewcommand*{\thecorollary}{\Alph{corollary}}
\renewcommand*{\thedefinition}{\Alph{definition}}

Inductive limit constructions are ubiquitous in operator algebras because they enable the transfer of structural information from relatively well-behaved building blocks to more general algebras. This transfer is particularly well utilized in the  classification of von Neumann and $\Cstar$-algebras (see \cite{Ell, EG96, Li20, Lin03, MvN} to name a few). To do so, one must first discern whether a given algebra is isomorphic to one arising from an inductive limit. 
The definitive result in this direction for von Neumann algebras was established in \cite{Con76}, 
where Connes showed that all injective, all amenable, and all semi-discrete von Neumann algebras contain an ultraweak$^*$-dense nested sequence of finite-dimensional von Neumann algebras, i.e., they are all approximately finite-dimensional (called AFD or hyperfinite). The direct analogue in the $\Cstar$-setting fails:  
Any $\Cstar$-algebra which arises as the inductive limit of finite-dimensional $\Cstar$-algebras (called an AF algebra) is nuclear, where nuclearity is considered the $\Cstar$-analogue to amenability and semi-discreteness, but most nuclear $\Cstar$-algebras are not AF.

Nonetheless, generalizations of inductive systems of finite-dimensional $\Cstar$- algebras have proven to be a powerful tool in the study of $\Cstar$-algebras. One way to generalize is to weaken the finite-dimensional assumption on the algebras, 
giving rise to systems of (sub)homogeneous $\Cstar$-algebras. 
Another route is to instead weaken the assumptions on the connecting maps of the system, giving rise to the NF systems of Blackadar and Kirchberg in \cite{BK97} and more recently the $\CPCstar$-systems introduced by the author and Winter in \cite{CW1}. 
It is this route we pursue here.

Classically, an inductive system consists of a sequence of $\Cstar$-algebras $(A_n)_n$ together with connecting $^*$-homomorphisms $\rho_{n+1,n}\colon A_n\to A_{n+1}$. These induce $^*$-homomorphisms $\rho_n\colon A_n\to \prod_m A_m/\bigoplus_m A_m$ into the $\Cstar$-algebra of norm bounded sequences modulo null convergent sequences, and the inductive limit of the system is simply the norm closure of these images $\overline{\bigcup_n\rho_n(A_n)}$. To generalize this construction, 
we relax the assumptions on the connecting maps from $^*$-homomorphisms to completely positive and contractive (c.p.c.)\ maps.  
Though not necessarily multiplicative, c.p.c.\ maps are $^*$-linear and positivity preserving (even up to matrix amplifications) and hence still preserve much of the structure of a $\Cstar$-algebra. We call a system $(A_n,\rho_{n+1,n})_n$  of $\Cstar$-algebras with c.p.c.\ connecting maps 
a \emph{c.p.c.\ system}, 
and define the limit exactly as in the classical setting, 
except now the induced maps $\rho_n\colon A_n\to \prod_m A_m/\bigoplus_m A_m$ are only c.p.c., and the limit $\overline{\bigcup_n\rho_n(A_n)}$ is only a closed self-adjoint subspace.

A key source of examples of c.p.c.\ systems come from c.p.c.\ approximations of (separable) nuclear $\Cstar$-algebras. From \cite{CE78,Kir77}, we know that a (separable) $\Cstar$-algebra $A$ is nuclear precisely when it admits a system of c.p.c.\ approximations consisting of a sequence $(F_n)_n$ of finite-dimensional $\Cstar$-algebras and c.p.c.\ maps $A\xrightarrow{\psi_n}F_n\xrightarrow{\varphi_n}A$ such that $\varphi_n\circ\psi_n$ converges pointwise in norm to $\mathrm{id}_A$. The sequence $(F_n)_n$ together with the maps $\rho_{n+1,n}\coloneqq \psi_{n+1}\circ\varphi_n\colon F_n\to F_{n+1}$ form a c.p.c.\ system, which somehow encodes our original $\Cstar$-algebra $A$. In particular, the limit of the system is completely order isomorphic to $A$. A complete order isomorphism between closed self-adjoint subspaces of $\Cstar$-algebras is a completely positive, completely isometric map with completely positive inverse. These are extremely robust identifications. For example, a complete order isomorphism between $\Cstar$-algebras is automatically a $^*$-isomorphism, and so the complete order isomorphism class of a $\Cstar$-algebra determines its $^*$-isomorphism class.  

The question now is when does a given c.p.c.\ system actually encode a (nuclear) $\Cstar$-algebra? In this article, we answer this question by establishing necessary and sufficient conditions for the limit of a c.p.c.\ system to be completely order isomorphic to a (nuclear) $\Cstar$-algebra. These are built around the following definition. 
\begin{definition}\label{definition a}
We say a c.p.c.\ system $(A_n,\rho_{n+1,n})_n$  is \emph{$\Cstar$-encoding} if for any $k\geq 0$, $x\in A_k$, and $\varepsilon>0$, there exists $M>k$ so that for all $m>n,j>M$, 
\[
    \|\rho_{m,n}(\rho_{n,k}(x^*)\rho_{n,k}(x))-\rho_{m,j}(\rho_{j,k}(x^*)\rho_{j,k}(x))\|<\varepsilon.\]
\end{definition}
\noindent \cref{definition a} is built to guarantee that the bilinear map $\bigcup_n \rho_n(A_n)\times \bigcup_n \rho_n(A_n)\\ \to \overline{\bigcup_n \rho_n(A_n)}$, defined for each $k\geq 0$ and $\rho_k(x),\rho_k(y)\in \bigcup_n \rho_n(A_n)$  by 
\begin{align}\label{prod'}
    (\rho_k(x),\rho_k(y))\mapsto \lim_n\rho_n(\rho_{n,k}(x)\rho_{n,k}(y)), \tag{$\bullet$}
\end{align} 
gives a product on the limit $\overline{\bigcup_n\rho_n(A_n)}$. This product likely differs from the one on $\prod_m A_m/\bigoplus_m A_m$, but nonetheless, when equipped with this product, the limit \emph{is} a $\Cstar$-algebra (\cref{prop: encoding systems are cstar}). Moreover, the identity map gives a complete order isomorphism between this $\Cstar$-algebra and the original limit $\overline{\bigcup_n\rho_n(A_n)}$ (\cref{prop: manifesting are coi'}). The robustness of a complete order isomorphism guarantees that this $\Cstar$-algebra is unique up to $^*$-isomorphism (\cref{cor: limit is Cstar}), and hence we refer to it as the \emph{$\Cstar$-limit} of the system. If the $\Cstar$-algebras in a $\Cstar$-encoding system are finite-dimensional, then the $\Cstar$-limit is nuclear (\cref{thm: sato}). In this case we can actually say much more: When $\Cstar$-algebras in any given c.p.c.\ system are finite-dimensional, $\Cstar$-encoding is necessary and sufficient for the limit to be completely order isomorphic to a (nuclear) $\Cstar$-algebra (\cref{thm: big theorem}): 
\begin{theorem}\label{big theorem}
    Let $(F_n,\rho_{n+1,n})_n$ be a c.p.c.\ system with finite-dimensional $\Cstar$-algebras $F_n$. Then the following are equivalent. 
    \begin{enumerate}[label=\textnormal{(\roman*)}]
        \item The limit $\overline{\bigcup_n\rho_n(F_n)}$ is completely order isomorphic to a $\Cstar$-algebra.
        \item The limit $\overline{\bigcup_n\rho_n(F_n)}$ is completely order isomorphic to a nuclear $\Cstar$-algebra.
        \item $(F_n,\rho_{n+1,n})_n$  has a $\Cstar$-encoding subsystem. 
    \end{enumerate}
\end{theorem}
\noindent Since the limit of a c.p.c.\ system is completely order isomorphic to the limit of any subsystem (\cref{subsystem}), the subsystem criteria above is quite mild. It follows that the $\Cstar$-limit is unchanged by passing to a subsystem, and hence any $\Cstar$-algebra that is completely order isomorphic to the limit of a c.p.c.\ system with finite-dimensional $\Cstar$-algebras is $^*$-isomorphic to the $\Cstar$-limit of the system. 

As a converse to \cref{big theorem}, every separable nuclear $\Cstar$-algebra is $^*$-isomorphic to the $\Cstar$-limit of a $\Cstar$-encoding system -- in fact one coming from a system of c.p.c.\ approximations of the nuclear $\Cstar$-algebra (\cref{thm: main}):
\begin{theorem}\label{main}
Let $(A\xrightarrow{\psi_n}F_n\xrightarrow{\varphi_n}A)_n$ be a system of c.p.c.\ approximations of a separable nuclear $\Cstar$-algebra $A$. After possibly passing to a summable\footnote{See \cref{def: summable}. Any system of c.p.c.\ approximations admits a summable subsystem (\cite[Remark 3.2(ii)]{CW1}).} subsystem of approximations, the associated c.p.c.\ system $(F_n,\psi_{n+1}\circ\varphi_n)_n$ is $\Cstar$-encoding, and the  map $\Psi:A\to \prod_m F_m/\bigoplus_m F_m$ induced by the $\psi_m:A\to F_m$ gives a complete order isomorphism between $A$ and the limit of the $\Cstar$-encoding system. 
\end{theorem}

With Theorems \ref{big theorem} and \ref{main} combined, $\Cstar$-encoding systems give a notion of inductive systems which yield all nuclear $\Cstar$-algebras in their limits: 
\begin{theorem}\label{corollary d}
The following are equivalent for a separable $\Cstar$-algebra:
\begin{enumerate}
    \item $A$ is nuclear.
    \item $A$ is completely order isomorphic to the limit of a $\Cstar$-encoding system. 
    \item $A$ is $^*$-isomorphic to the $\Cstar$-limit of a $\Cstar$-encoding system. 
\end{enumerate}
\end{theorem}




We conclude by comparing $\Cstar$-encoding systems with NF systems (\cite[Definition 5.2.1]{BK97}), which are finite-dimensional c.p.c.\ systems with asymptotically multiplicative connecting maps, and with $\CPCstar$-systems (\cite[Definition 2.2]{CW1}), which are finite-dimensional c.p.c.\ systems with asymptotically order zero connecting maps. 
\begin{definition}
   Let $(F_n,\rho_{n+1,n})_n$  be a finite-dimensional c.p.c.\ system. 
   \begin{enumerate}[label=\textnormal{(\roman*)}]
\item We say $(F_n,\rho_{n+1,n})_n$ is  \emph{NF} if for any $k\geq 0$, $x,y\in F_k$, and $\varepsilon>0$, there exists $M>k$ so that for all $m>n>M$, 
\[
    \|\rho_{m,k}(x)\rho_{m,k}(y)-\rho_{m,n}(\rho_{n,k}(x)\rho_{n,k}(y))\|<\varepsilon.\]
\item We say $(F_n,\rho_{n+1,n})_n$ is  \emph{$\CPCstar$} if for any $k\geq 0$, $x,y\in F_k$, and $\varepsilon>0$, there exists $M>k$ so that for all $m>n,j>M$, 
\[
    \|\rho_{m,k}(x)\rho_{m,k}(y)-\rho_{m,j}(1_{F_j})\rho_{m,n}(\rho_{n,k}(x)\rho_{n,k}(y))\|<\varepsilon.\]
\item We say $(F_n,\rho_{n+1,n})_n$ is   \emph{$\Cstar$-encoding}\footnote{Here we use an equivalent formulation of \cref{definition a} for easier comparison (see \cref{same definitions}).} if for any $k\geq 0$, $x,y\in F_k$, and $\varepsilon>0$, there exists $M>k$ so that for all $m>n,j>M$, 
\[
    \|\rho_{m,j}(\rho_{j,k}(x)\rho_{j,k}(y))-\rho_{m,n}(\rho_{n,k}(x)\rho_{n,k}(y))\|<\varepsilon.\]
   \end{enumerate}

\end{definition}
 $\Cstar$-encoding systems immediately generalize NF systems and, by \cite[Proposition 2.7]{CW1} and \cref{big theorem}, they generalize  $\CPCstar$-systems as well. 
\cref{corollary d} should be compared with \cite[Theorem 5.2.2]{BK97}, which says that a separable $\Cstar$-algebra is nuclear and quasidiagonal if and only if it is isomorphic to the limit of an NF system. Quasidiagonal nuclear $\Cstar$-algebras form a well-studied class, but this class excludes many important nuclear $\Cstar$-algebras including the Toeplitz algebra, the Cuntz algebras, and more generally any $\Cstar$-algebra containing a proper isometry.  
 \cref{corollary d} should also be compared with \cite[Theorem C]{CW1}, which says that a separable $\Cstar$-algebra is nuclear if and only if it is isomorphic to the $\Cstar$-limit of a $\CPCstar$-system.  That means $\CPCstar$-systems and $\Cstar$-encoding systems both capture all nuclear $\Cstar$-algebras in their limits. But the correspondences in these theorems go deeper than that between nuclear (quasidiagonal) $\Cstar$-algebras and limits of $\Cstar$-encoding/NF/$\CPCstar$-systems. It turns out that any $\Cstar$-encoding system $(F_n,\rho_{n+1,n})_n$ with finite-dimensional $\Cstar$-algebras gives rise to a system of c.p.c.\ approximations $(A\xrightarrow{\psi_n}F_n\xrightarrow{\varphi_n}A)_n$ of its nuclear $\Cstar$-limit $A$, and when the system is moreover NF (resp.~$\CPCstar$), the $\psi_n$ are approximately (resp.~order zero) (\cref{encoding to approx}). Conversely, 
any system $(A\xrightarrow{\psi_n}F_n\xrightarrow{\varphi_n}A)_n$ of c.p.c.\ approximations of a nuclear $\Cstar$-algebra $A$ has a summable subsystem of approximations which gives rise to a $\Cstar$-encoding system (as in \cref{main}). Furthermore, if (and only if) the $\psi_n$ are approximately multiplicative (resp.~order zero), then (after possibly passing to a further subsystem) the $\Cstar$-encoding system is NF. 

The asymptotic multiplicativity and asymptotic order zero assumptions of NF and $\CPCstar$-systems carry significant structure, giving these systems more leverage to capture structural 
and tracial information of the $\Cstar$-limit. On the other hand, this structure often makes these assumptions difficult to satisfy. From \cite{BK97,BCW,CW1,WZ09} we know that for any separable nuclear and quasidiagonal (resp.\ nuclear) $\Cstar$-algebra $A$ \emph{there exists} a system $(A\xrightarrow{\psi_n}F_n\xrightarrow{\varphi_n}A)_n$ of c.p.c.\ approximations with $(\psi_n)_n$ approximately multiplicative (resp.\  order zero) so that the induced c.p.c.\ system $(F_n,\psi_{n+1}\circ\varphi_n)_n$ is NF (resp.\ $\CPCstar$). However, explicit examples of such systems of approximations
are much harder to come by than systems of approximations with just c.p.c.\ maps. Moreover, many classic constructions of systems of c.p.c.\ approximations are generally neither asymptotically multiplicative nor approximately order zero and will therefore induce neither NF nor $\CPCstar$-systems. This includes the usual systems of approximations for most amenable group $\Cstar$-algebras built from F{\o}lner sequences (see \cref{sect: Folner Z}). 
On the other hand, \emph{any} system of c.p.c.\ approximations, including these built from F{\o}lner sequences, induces a $\Cstar$-encoding system. 

\bigskip

\noindent \text{\bf Acknowledgements:} The author is grateful Wilhelm Winter and Jamie Gabe for many enlightening conversations and to Joachim Cuntz for his helpful feedback.

\renewcommand*{\thetheorem}{\roman{theorem}}
\numberwithin{theorem}{section}
\renewcommand*{\thecorollary}{\roman{theorem}}
\numberwithin{corollary}{section}
\renewcommand*{\thedefinition}{\roman{definition}}
\numberwithin{definition}{section}

\section{Preliminaries}\label{sect: prelim}
For a $\Cstar$-algebra $A$, we denote its set of positive elements by $A_+$, its closed unit ball by $A^1$, and the intersection of these two sets by $A_+^1$. 

Given a sequence $(A_n)_n$ of $\Cstar$-algebras, we denote by $\prod_n A_n$ the space of norm bounded sequences $(a_n)_n$ with $a_n\in A_n$ for all $n$; we denote by $\bigoplus_n A_n$ the two-sided closed ideal of $\prod_n A_n$ consisting of sequences converging to 0 in norm; and we denote the quotient $\Cstar$-algebra\footnote{ In the literature ``$A_\infty$'' is usually used to denote the sequence algebra $\ell^{\infty}(\mathbb{N}, A)/c_0(\mathbb{N},A)$ of a given $\Cstar$-algebra $A$. This agrees with our notation when $A_n=A$ for all $n$.} by
\begin{align*}
    A_\infty\coloneqq\textstyle{\prod_m A_m/\bigoplus_m A_m}.
\end{align*}
For $(a_n)_n\in \prod_n A_n$, we write $[(a_n)_n]$ for its image in this quotient.  

\begin{definition}\label{def: cp}
Let $A$ and $B$ be $\Cstar$-algebras and $X\subset A$ and $Y\subset B$ self adjoint subspaces. We say a $^*$-linear map $\theta\colon X\to Y$ is \emph{positive} if $\theta(x)\geq 0$ for all $x\geq 0$ and \emph{completely positive} (c.p.)\ if this holds for all matrix amplifications $\theta^{(r)}\colon \M_r(X)\to \M_r(Y)$. If $\theta$ is c.p.\ and $\sup_{r\geq 1}\|\theta^{(r)}\|=\|\theta\|\leq 1$ we call it \emph{completely positive and completely contractive} (abbreviated c.p.c.);  if $\theta^{(r)}$ is isometric for all $r\geq 1$, we call it \emph{completely isometric}. If $\theta$ is c.p.\ and completely isometric with c.p.\ inverse $\theta^{-1}\colon \theta(X)\to A$, we say it is a \emph{complete order embedding}, and when it is moreover surjective, we say it is a \emph{complete order isomorphism}.
\end{definition}

 \begin{remarks}\label{rmk: surj coi}
(i) If the domain of a completely isometric c.p.\ map is a $\Cstar$-algebra, then its inverse is automatically c.p., and it is automatically a complete order embedding 
 (see \cite[Remark 1.7(ii)]{CW1}).

(ii) Despite not being a $^*$-homomorphism, a complete order isomorphism is extremely robust. 
For instance, a complete order isomorphism $\theta\colon A\to B$ between $\Cstar$-algebras is automatically a $^*$-isomorphism (see for instance \cite[Theorem II.6.9.17]{Bla06}). Even if $\theta$ is only a complete order embedding, if there is another complete order embedding $\psi:C\to B$ from a $\Cstar$-algebra $C$ with $\psi(C)=\theta(A)$, then $\theta^{-1}\circ \psi:C\to A$ is a complete order isomorphism and hence a $^*$-isomorphism. 

(iii) Though we make no assumputions on units, we note that when $\theta$ is unital (i.e., u.c.p.), $\theta(A)\subset B$ is an operator subsystem and this coincides with the usual terminology for a complete order isomorphism. 
 \end{remarks}

Throughout the article, we will utilize a consequence of Stinespring's Dilation Theorem which is proved in \cite[Lemma 3.5]{KW} for positive elements. Since the proof deals only with the squares of positive elements, the exact same proof carries through for self-adjoint elements, and we state here the version we will use for easy reference. 

\begin{lemma}[{\cite[Lemma 3.5]{KW}}]\label{lem: Stinespring}
Let $A$ and $B$ be $\Cstar$-algebras, $a\in A$ self-adjoint, and $\eta>0$. If $A\xrightarrow{\psi}B\xrightarrow{\varphi}A$ are c.p.c.\ and $\|\varphi(\psi(a^i))-a^i\|<\eta^2/3$ for $i\in \{1,2\}$, then for all $b\in B$,
\begin{align}\label{eq: Stinespring}
    \|\varphi(\psi(a)b)-\varphi(\psi(a))\varphi(b)\|<\eta\|b\|.
\end{align}
\end{lemma}

Finally we recall from \cite{CE78,Kir77} that a separable $\Cstar$-algebra is nuclear if and only if it admits a system of c.p.c.\ approximations in the following sense:
\begin{definition}
    Let $A$ be a separable $\Cstar$-algebra. 
A \emph{system of c.p.c.\ approximations}  $(A\xrightarrow{\psi_n}F_n\xrightarrow{\varphi_n}A)_n$ of $A$ consists of a sequence $(F_n)_n$ of finite-dimensional $\Cstar$-algebras together with c.p.c.\ maps $\psi_n\colon A\to F_n$ and $\varphi_n\colon F_n\to A$ for all $n\in \mathbb{N}$ so that for each $a\in A$
\[\lim_n\|\varphi_n(\psi_n(a))-a\|=0.\]
\end{definition}

\section{Encoding a $\Cstar$-algebra into a c.p.c.\ system}\label{sect: abstract}

The aim of this section is to provide a condition on a system $A_0\xrightarrow{\rho_{1,0}}A_1\xrightarrow{\rho_{2,1}}A_2\longrightarrow\hdots$ of $\Cstar$-algebras with c.p.c.\ connecting maps, which guarantees a $\Cstar$-structure on the limit.

\begin{definition}\label{def: cpc system}
A \emph{c.p.c.\ system} $(A_n,\rho_{n+1,n})_n$ consists of a sequence $(A_n)_n$ of $\Cstar$-algebras along with a family of c.p.c.\ maps $\{\rho_{n+1,n}\colon A_n\to A_{n+1}\}_n$. 
 For $m>n\geq 0$, we set $\rho_{m,n}\coloneqq\rho_{m,m-1}\circ\hdots\circ\rho_{n+1,n}$ and $\rho_{n,n}\coloneqq \mathrm{id}_{A_n}$.  
When all the $\Cstar$-algebras are finite-dimensional, we call the system \emph{finite-dimensional}. 

Given a c.p.c.\ system $(A_n,\rho_{n+1,n})_n$ we define c.p.c.\ maps $\rho_n\colon A_n\to A_\infty$ by $\rho_n(x)=[(\rho_{m,n}(x))_{m>n}]$. We denote the closed self-adjoint subspace $\overline{\bigcup_n \rho_n(A_n)}\subset A_\infty$ by $\Lima$ as in \cite[Definition 2.1]{CW1} and call it the \emph{limit} of the system.

Given a c.p.c.\ system $(A_n, \rho_{n+1,n})$, a \emph{c.p.c.\ subsystem} of $(A_n, \rho_{n+1,n})$ is a c.p.c.\ system  $(A_{n_j}, \rho_{n_{j+1},n_j})_j$ where $(n_j)_j$ is a strictly increasing sequence in $\mathbb{N}$.
\end{definition}
Note that for a c.p.c.\ system $(A_n,\rho_{n+1,n})_n$ and any $r\geq 1$, we have 
    \begin{align}\label{Mr}
    \textstyle \M_r\Big(\overline{\bigcup_n \rho_n(A_n)}\Big)=\overline{\textstyle{\bigcup_n} \rho^{(r)}_n(\M_r(A_n))}\subset \M_r(A_\infty).\end{align}

    The following lemma tells us that the limit is unchanged if we replace a c.p.c.\ system with a subsystem. 

\begin{lemma}\label{subsystem}
Let $(A_n, \rho_{n+1,n})$ be a c.p.c.\ system, $(n_j)_j$ a strictly increasing sequence in $\mathbb{N}$, and $(A_{n_j}, \rho_{n_{j+1},n_j})_j$ the subsystem. Set $A_\infty= \sfrac{\prod_n A_n}{\bigoplus_n A_n}$ and $\hat{A}_\infty= \sfrac{\prod_j A_{n_j}}{\bigoplus_j A_{n_j}}$. Let $\rho_n:A_n\to A_\infty$ and $\hat{\rho}_{n_j}:A_{n_j}\to \hat{A}_\infty$ be the induced maps and $\Lima$ and $\Limaj$ the limits as in \cref{def: cpc system}. 
Then the surjective $^*$-homomorphism $\hat{\pi}:A_\infty\to \hat{A}_\infty$ induced by the natural surjection $\pi:\prod_n A_n\to \prod_j A_{n_j}$ restricts to a complete order isomorphism $\Lima\to \Limaj$. 
\end{lemma}

\begin{proof}
We aim to show that $\theta\coloneqq \hat{\pi}|_{\Lima}$ is a completely isometric c.p.\ map with c.p.\ inverse.
Since $\hat{\pi}$ is a $^*$-homomorphism, $\theta$ is c.p.c. We first show that it is  isometric. 
Since the maps are coherent,  $\bigcup_j \rho_{n_j}(A_{n_j})$ is dense in $\Lima$, and so it suffices to check that $\theta$ is isometric on $\rho_{n_k}(x)$ for fixed $n_k\geq 0$ and $x\in A_{n_k}$. Since $\theta\circ\rho_{n_k}(x)=\hat{\rho}_{n_k}(x)$, that amounts to showing that 
$\|\rho_{n_k}(x)\|=\|\hat{\rho}_{n_k}(x)\|$.  Since  $\rho_{m,n_k}$ is contractive for all $m>n_k$,  $(\|\rho_{m,n_k}(x)\|)_{m>k})$ is a decreasing sequence in $\mathbb{R}$ converging to $\|\rho_{n_k}(x)\|$, and  $(\|\rho_{n_j,n_k}(x)\|)_{j>k})$ is a decreasing subsequence of $(\|\rho_{m,n_k}(x)\|)_{m>k})$ converging to $\|\hat{\rho}_{n_k}(x)\|$. Hence the limits coincide, and thus the map is isometric. 
For $r\geq 1$, we know from \eqref{Mr}, that $\bigcup_j \rho^{(r)}_{n_j}(\M_r(A_{n_j}))$ is dense in $\M_r(\Lima)$ and each $\rho^{(r)}_{m,n}$ is still contractive. The same argument then shows that $\theta^{(r)}$ is isometric, and so $\theta$ is completely isometric. 

It remains to check that $\theta^{-1}$ is c.p. We define a c.p.c.\ split $\sigma: \prod_j A_{n_j}\to \prod_n A_n$ of $\pi$ by 
\begin{align*}
(x_{n_j})_j \mapsto (0,...,0, x_{n_0}, \rho_{n_0+1,n_0}(x_{n_0}),..., \rho_{n_1-1,n_0}(x_{n_0}), x_{n_1},\rho_{n_1+1,n_1}(x_{n_1}),...).
\end{align*} 
Let $\hat{\sigma}:\hat{A}_\infty\to A_\infty$ be the induced c.p.c.\ map. Then for each $j\geq 0$, we have $\hat{\rho}_{n_j}=\hat{\pi}\circ \rho_{n_j}=\theta\circ\rho_{n_j}$ and $\rho_{n_j}=\hat{\sigma}\circ\hat{\rho}_{n_j}$, i.e.,  
$\hat{\sigma}(\hat{\rho}_{n_j}(x))=\rho_{n_j}(x)$ and $\theta(\rho_{n_j}(x))=\hat{\rho}_{n_j}(x)$ for all $j\geq 0$ and $x\in A_{n_j}$. Since $\bigcup_j \rho_{n_j}(A_{n_j})$ is dense in $\Lima$ and $\bigcup_j \hat{\rho}_{n_j}(A_{n_j})$ is dense in $\Limaj$, this shows that $\hat{\sigma}|_{\Limaj}=\theta^{-1}$, and hence $\theta^{-1}$ is c.p.
\end{proof}

Now we are ready for our main definition. The presentation here looks a little different than \cref{definition a}, but as we shall see in \cref{same definitions}, the two definitions are equivalent. 

\begin{definition}\label{def: encoding}
We say a c.p.c.\ system $(A_n,\rho_{n+1,n})_n$ 
 is \emph{$\Cstar$-encoding} if for any $k\geq 0$, $x,y\in A_k$, and $\varepsilon>0$, there exists $M>k$ so that for all $m>n,j>M$, 
\begin{align*}
    \|\rho_{m,n}(\rho_{n,k}(x)\rho_{n,k}(y))-\rho_{m,j}(\rho_{j,k}(x)\rho_{j,k}(y))\|&<\varepsilon. 
    \end{align*}
\end{definition}

\begin{example}\label{prop: asym mult is encoding}
Recall from \cite[Definition 2.1.1]{BK97} that a c.p.c.\ system $(A_n,\rho_{n+1,n})_n$ is \emph{asymptotically multiplicative} if for any $k\geq 0$, $x,y\in A_k$, and $\varepsilon>0$, there exists $M>0$ so that for all $m>n>M$,
\begin{align*}
    \|\rho_{m,n}(\rho_{n,k}(x)\rho_{n,k}(y))-\rho_{m,k}(x)\rho_{m,k}(y)\|<\varepsilon.
\end{align*}
A finite-dimensional asymptotically multiplicative c.p.c.\ system is an \emph{NF system}. All asymptotically multiplicative c.p.c.\ systems, including NF systems, are $\Cstar$-encoding. 
\end{example}



Next, we give a lemma that will facilitate the associativity of our desired product.

\begin{lemma}\label{ass}
Let $(A_n,\rho_{n+1,n})$ be a $\Cstar$-encoding system. For any $k\geq 0, x,y,z\in A_k$, and $\varepsilon>0$, there exists an $M>k$ so that for all $m>n>j>M$
\begin{align}
    \|\rho_{m,j}\big(\rho_{j,k}(x)\rho_{j,k}(y)\rho_{j,k}(z)\big)-\rho_{m,n}\big(\rho_{n,k}(x)\rho_{n,j}\big(\rho_{j,k}(y)\rho_{j,k}(z)\big)\big)\|<\varepsilon.
\end{align}
    In particular, for any $k\geq 0$, $x,y,z\in A_k$, and $\varepsilon>0$, there exists $M>k$ so that for all $m>n>j>M$, 
    \begin{align}
    \|\rho_{m,n}\big(\rho_{n,j}\big(\rho_{j,k}(x)\rho_{j,k}(y)\big)\rho_{n,k}(z)\big) -\rho_{m,n}\big(\rho_{n,k}(x)\rho_{n,j}\big(\rho_{j,k}(y)\rho_{j,k}(z)\big)\big)\|&<\varepsilon.\label{eqn: asym associative}
    \end{align}
\end{lemma}

The following argument is essentially the proof of associativity in \cite[Theorem 3.1]{CE77}. The author is grateful to Jamie Gabe for suggesting this approach. 

\begin{proof}
First, we start with a generic matrix amplification fact that is well-known to experts: Let $A$ be a $\Cstar$-algebra and $u=\begin{pmatrix} u_{11} & u_{12}\\  u_{12}^* & u_{22}\end{pmatrix}\in \M_2(A)_+$. Then 
\begin{align}\label{matrix trick}
        \|u_{12}\|\leq \|u_{11}\|^{1/2}\big(\|u_{11}\|^{1/2}+\|u_{22}\|^{1/2}\big). 
\end{align}
To see this, let $v=\begin{pmatrix} v_{11} & v_{12}\\  v_{12}^* & v_{22}\end{pmatrix}\in \M_2(A)$ be a self-adjoint element such that 
\begin{align*}
    u=v^2=\begin{pmatrix} v_{11}^2+v_{12}v_{12}^* & v_{11}v_{12}+v_{12}v_{22}\\  v_{12}^*v_{11}+v_{22}v_{12}^* & v_{12}^*v_{12}+v_{22}^2\end{pmatrix}.
\end{align*} 
Then
\begin{align*}
    \|u_{12}\|&=\|v_{11}v_{12}+v_{12}v_{22}\|\\
    &\leq \|v_{12}\|\big(\|v_{11}\|+\|v_{22}\|\big)\\
    &\leq \|v_{11}^2+v_{12}v_{12}^*\|^{1/2}\big(\|v_{11}^2+v_{12}v_{12}^*\|^{1/2} +\|v_{12}^*v_{12}+v_{22}^2\|^{1/2}\big)\\
    &= \|u_{11}\|^{1/2}\big(\|u_{11}\|^{1/2}+\|u_{22}\|^{1/2}\big), 
\end{align*}
which establishes \eqref{matrix trick}.

    Now, set $k\geq 0, x,y,z\in A_k^1$, and $\varepsilon>0$. 
    For each $j>k$, set 
    \begin{align*}
    x_j&\coloneqq \rho_{j,k}(x)\\
        a_j&\coloneqq 
        \rho_{j,k}(y)\rho_{j,k}(z)
        \\
        d_j&\coloneqq \begin{pmatrix}
            0 & x_j\\ x_j^* & a_j\end{pmatrix}\in \M_2(A_j).
    \end{align*}
    By the Kadison-Schwarz inequality, we have for all $n>j>k$
        \begin{align*}
        \rho_{n,j}^{(2)}(d_j^*d_j)&= \begin{pmatrix} \rho_{n,j}(x_jx_j^*) & \rho_{n,j}(x_ja_j)\\ \rho_{n,j}(a_j^*x_j^*) & \rho_{n,j}(x_j^*x_j+a_j^*a_j)            \end{pmatrix}\\
        &\geq \rho_{n,j}^{(2)}(d_j)^*\rho_{n,j}^{(2)}(d_j)\\&=\begin{pmatrix} \rho_{n,j}(x_j)\rho_{n,j}(x_j)^* & \rho_{n,j}(x_j)\rho_{n,j}(a_j)\\ \rho_{n,j}(a_j)^*\rho_{n,j}(x_j)^* & \rho_{n,j}(x_j)^*\rho_{n,j}(x_j)+\rho_{n,j}(a_j)^*\rho_{n,j}(a_j)   
        \end{pmatrix}.
    \end{align*}
    Then for all $m>n>j>k$, we have 
    \begin{align*}
       \rho_{m,n}^{(2)}\big(\rho_{n,j}^{(2)}(d_j^*d_j)-\rho_{n,j}^{(2)}(d_j)^*\rho_{n,j}^{(2)}(d_j)\big)=\rho_{m,j}^{(2)}(d_j^*d_j)-\rho_{m,n}^{(2)}\big(\rho_{n,j}^{(2)}(d_j)^*\rho_{n,j}^{(2)}(d_j)\big)\geq 0.
    \end{align*}
    Writing
    \begin{align}\label{u}
        u=\begin{pmatrix} u_{11} & u_{12}\\  u_{12}^* & u_{22}\end{pmatrix}\coloneqq \rho_{m,j}^{(2)}(d_j^*d_j)-\rho_{m,n}^{(2)}\big(\rho_{n,j}^{(2)}(d_j)^*\rho_{n,j}^{(2)}(d_j)\big),
    \end{align} 
    we have
    \begin{align*}
        u_{11}&=\rho_{m,j}(x_jx_j^*)-\rho_{m,n}\big(\rho_{n,j}(x_j)\rho_{n,j}(x_j)^*\big) \\
        &=\rho_{m,j}(\rho_{j,k}(x)\rho_{j,k}(x)^*)-\rho_{m,n}\big(\rho_{n,j}(\rho_{j,k}(x))\rho_{n,j}(\rho_{j,k}(x))^*\big) \\
        &=\rho_{m,j}(\rho_{j,k}(x)\rho_{j,k}(x)^*)-\rho_{m,n}(\rho_{n,k}(x)\rho_{n,k}(x)^*) \\
        u_{12}&=\rho_{m,j}(x_ja_j)-\rho_{m,n}\big(\rho_{n,j}(x_j)\rho_{n,j}(a_j)\big)\\
        &=\rho_{m,j}\big(\rho_{j,k}(x)\rho_{j,k}(y)\rho_{j,k}(z)\big)-\rho_{m,n}\big(\rho_{n,j}(\rho_{j,k}(x))\rho_{n,j}\big(\rho_{j,k}(y)\rho_{j,k}(z)\big)\big)\\
        &=\rho_{m,j}\big(\rho_{j,k}(x)\rho_{j,k}(y)\rho_{j,k}(z)\big)-\rho_{m,n}\big(\rho_{n,k}(x)\rho_{n,j}\big(\rho_{j,k}(y)\rho_{j,k}(z)\big)\big)
        \end{align*}
        and
        \begin{align*}
        \|u_{22}\|&=\|\rho_{m,j}(x_j^*x_j+a_j^*a_j)-\rho_{m,n}\big(\rho_{n,j}(x_j)^*\rho_{n,j}(x_j)+\rho_{n,j}(a_j)^*\rho_{n,j}(a_j)\big)\|\leq 4.
    \end{align*}
    
    Choose $\eta>0$ so that $\eta^{1/2}(\eta^{1/2}+2)<\sfrac{\varepsilon}{2}$, and use \cref{def: encoding} to choose $M>k$ so that for all $m>n,j>M$
    \begin{align}\label{x estimate}
        \|\rho_{m,j}(\rho_{j,k}(x)\rho_{j,k}(x)^*)-\rho_{m,n}(\rho_{n,k}(x)\rho_{n,k}(x)^*)\|<\eta.
    \end{align}
Then for any $m>n>j>M$, we apply  \eqref{matrix trick} to our $u$ in \eqref{u} for this $m>n>j$ and use the estimate from \eqref{x estimate} to get 
    \begin{align*}
         &\|\rho_{m,j}\big(\rho_{j,k}(x)\rho_{j,k}(y)\rho_{j,k}(z)\big)-\rho_{m,n}\big(\rho_{n,k}(x)\rho_{n,j}\big(\rho_{j,k}(y)\rho_{j,k}(z)\big)\big)\|\\
         &=\|u_{12}\|\\
&\leq \|u_{11}\|^{1/2}\big(\|u_{11}\|^{1/2}+\|u_{22}\|^{1/2}\big)\\
&<\eta^{1/2}(\eta^{1/2}+2)\\
&<\sfrac{\varepsilon}{2}.
    \end{align*}

    A similar estimate (or simply taking adjoints) shows that there exists $M'>k$ so that for all $m>n>j>M'$, 
    \begin{align*}
    \|\rho_{m,n}\big(\rho_{n,j}\big(\rho_{j,k}(x)\rho_{j,k}(y)\big)\rho_{n,k}(z)\big) -\rho_{m,n}\big(\rho_{n,j}\big(\rho_{j,k}(x)\rho_{j,k}(y)\big)\rho_{n,k}(z)\big)\|&<\sfrac{\varepsilon}{2},
    \end{align*}
    and together, these establish \eqref{eqn: asym associative}.
\end{proof}

\begin{remark}\label{same definitions}
    By instead setting $a_j\coloneqq \rho_{j,k}(y)$, the preceding proof also shows that \cref{def: encoding} is indeed equivalent to \cref{definition a}. 
\end{remark}

For the sake of easy reference in the proofs of \cref{prop: encoding systems are cstar} and \cref{prop: manifesting are coi'}, we highlight how some of the conditions from \cref{def: encoding} 
look in the quotient $A_\infty$. 

\begin{lemma}\label{prop: observations in infty}
The sequence $\big(\rho^{(r)}_n(\rho^{(r)}_{n,k}(x)\rho^{(r)}_{n,k}(y))\big)_n$ converges in $\M_r\big(\Lima\big)$ for any $r\geq 1$, $k\geq 0$ and $x,y\in \M_r(A_k)$, and 
\begin{enumerate}
    \item $\|\lim_n\rho^{(r)}_n\big(\rho^{(r)}_{n,k}(x)\rho^{(r)}_{n,k}(y)\big)\|\leq \|\rho^{(r)}_k(x)\|\|\rho^{(r)}_k(y)\|$ \label{it: at infty 3} and
    \item $\|\lim_n\rho^{(r)}_n\big(\rho^{(r)}_{n,k}(x)^*\rho^{(r)}_{n,k}(x)\big)\|=\|\rho^{(r)}_k(x)\|^2$.\label{it: at infty 4}
\end{enumerate}
\end{lemma}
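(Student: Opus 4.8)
The plan is to extract what we need from the two conditions in \cref{def: encoding}. Fix $r\geq 1$, $k\geq 0$, and $x,y\in\M_r(A_k)$; since matrix amplifications of a c.p.c.\ system form a c.p.c.\ system and condition \ref{it: times} passes verbatim to amplifications, I will suppress the superscript $(r)$ and argue as if $r=1$, keeping condition (ii) available in its amplified form. First I would show the sequence $\big(\rho_n(\rho_{n,k}(x)\rho_{n,k}(y))\big)_n$ is Cauchy in $\M_r(A_\infty)$, hence convergent. The element $\rho_n(\rho_{n,k}(x)\rho_{n,k}(y))\in A_\infty$ is represented by the bounded sequence $\big(\rho_{m,n}(\rho_{n,k}(x)\rho_{n,k}(y))\big)_{m>n}$, so its distance to $\rho_j(\rho_{j,k}(x)\rho_{j,k}(y))$ in $A_\infty$ is $\limsup_{m}\|\rho_{m,n}(\rho_{n,k}(x)\rho_{n,k}(y))-\rho_{m,j}(\rho_{j,k}(x)\rho_{j,k}(y))\|$. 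Given $\varepsilon>0$, apply \eqref{eqn: asym stinespring} to get $M>k$ such that for all $m>n>j>M$ this quantity is $<\varepsilon$; taking the $\limsup$ over $m$ shows $\|\rho_n(\cdots)-\rho_j(\cdots)\|_{A_\infty}\leq\varepsilon$ for $n,j>M$. Thus the sequence is Cauchy, and since $\M_r(A_\infty)$ is complete (and each term lies in the closed subspace $\M_r\big(\Lima\big)$) the limit exists there.

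Next I would prove \ref{it: at infty 3}. For fixed $n>k$, the element $\rho_{n,k}(x)\rho_{n,k}(y)$ lives in the $\Cstar$-algebra $A_n$, so $\|\rho_{n,k}(x)\rho_{n,k}(y)\|\leq\|\rho_{n,k}(x)\|\,\|\rho_{n,k}(y)\|$. Applying the c.p.c.\ map $\rho_m$ (which is completely contractive, hence $\|\rho_m\|\leq 1$ on matrix amplifications) gives $\|\rho_n(\rho_{n,k}(x)\rho_{n,k}(y))\|_{A_\infty}=\limsup_m\|\rho_{m,n}(\rho_{n,k}(x)\rho_{n,k}(y))\|\leq\|\rho_{n,k}(x)\|\,\|\rho_{n,k}(y)\|$. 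Now $\|\rho_{n,k}(x)\|$ is exactly the $n$-th coordinate of the representative of $\rho_k(x)\in A_\infty$ (truncated appropriately), so $\limsup_n\|\rho_{n,k}(x)\|\leq\|\rho_k(x)\|_{A_\infty}$, and likewise for $y$. Taking $n\to\infty$ in the inequality and using that the norm is continuous along the convergent sequence yields $\|\lim_n\rho_n(\rho_{n,k}(x)\rho_{n,k}(y))\|\leq\|\rho_k(x)\|\,\|\rho_k(y)\|$.

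For \ref{it: at infty 4} the inequality ``$\leq$'' is immediate from \ref{it: at infty 3} with $y$ replaced by $x$ and $x$ replaced by $x^*$ (note $\|\rho_k(x^*)\|=\|\rho_k(x)\|$ since $\rho_k$ is $^*$-linear). The reverse inequality ``$\geq$'' is where condition (ii) enters and is the main obstacle. I would fix $\varepsilon>0$ and use \eqref{eqn: everything else} (in its $r$-amplified form) to choose $M>k$ with $\|\rho_{m,n}(\rho_{n,k}(x)^*\rho_{n,k}(x))\|<\|\rho_{m,k}(x)\|^2+\varepsilon$ for all $m>n>M$. Rearranged, $\|\rho_{m,k}(x)\|^2>\|\rho_{m,n}(\rho_{n,k}(x)^*\rho_{n,k}(x))\|-\varepsilon$; taking $\limsup_m$ of the right side for fixed $n$ recovers $\|\rho_n(\rho_{n,k}(x)^*\rho_{n,k}(x))\|_{A_\infty}-\varepsilon$, while $\limsup_m\|\rho_{m,k}(x)\|^2\leq\|\rho_k(x)\|^2$. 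Hence $\|\rho_k(x)\|^2\geq\|\rho_n(\rho_{n,k}(x)^*\rho_{n,k}(x))\|-\varepsilon$ for every $n>M$; letting $n\to\infty$ along the convergent sequence and then $\varepsilon\to 0$ gives $\|\rho_k(x)\|^2\geq\|\lim_n\rho_n(\rho_{n,k}(x)^*\rho_{n,k}(x))\|$, completing the equality. The one point demanding care throughout is the bookkeeping between the $A_\infty$-norm of $\rho_n(w)$ for $w\in A_n$ and the $\limsup_{m>n}$ of the coordinate norms $\|\rho_{m,n}(w)\|$, which I would state once as a small observation and then reuse.
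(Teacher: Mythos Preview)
Your convergence argument and your proof of (1) are correct. In fact your route to (1) is more elementary than the paper's: the paper invokes condition (ii) of \cref{def: encoding} together with a Cauchy--Schwarz inequality for c.p.\ maps to bound $\|\rho_{m,n}(\rho_{n,k}(x)\rho_{n,k}(y))\|$ at the coordinate level, whereas you get by with submultiplicativity of the norm in $A_n$, contractivity of $\rho_n$, and $\limsup_n\|\rho_{n,k}(x)\|=\|\rho_k(x)\|_{A_\infty}$.

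There is, however, a genuine gap in your proof of (2). You correctly obtain ``$\leq$'' from (1), but your argument for the reverse inequality actually re-derives ``$\leq$'' rather than ``$\geq$''. Condition \eqref{eqn: everything else} is an \emph{upper} bound on $\|\rho_{m,n}(\rho_{n,k}(x)^*\rho_{n,k}(x))\|$, and after your $\limsup_m$ and $n\to\infty$ you arrive at
\[
\|\rho_k(x)\|^2 \;\geq\; \big\|\lim_n \rho_n\big(\rho_{n,k}(x)^*\rho_{n,k}(x)\big)\big\|,
\]
which is exactly the inequality you already had from (1), not its reverse. No rearrangement of an upper-bound hypothesis on a quantity can produce a lower bound on that same quantity.

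The missing lower bound does not come from \eqref{eqn: everything else} at all; it comes from the Kadison--Schwarz inequality for the c.p.c.\ map $\rho_n\colon A_n\to A_\infty$. With $a=\rho_{n,k}(x)$ one has $\rho_n(a)^*\rho_n(a)\leq \rho_n(a^*a)$ in $A_\infty$, and since $\rho_n\circ\rho_{n,k}=\rho_k$ this reads $\rho_k(x)^*\rho_k(x)\leq \rho_n(\rho_{n,k}(x)^*\rho_{n,k}(x))$. Taking norms,
\[
\|\rho_k(x)\|^2 \;=\; \|\rho_k(x)^*\rho_k(x)\| \;\leq\; \big\|\rho_n\big(\rho_{n,k}(x)^*\rho_{n,k}(x)\big)\big\|
\]
for every $n\geq k$, and letting $n\to\infty$ along the convergent sequence gives the required ``$\geq$''. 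This is the paper's approach: its use of condition (ii) goes into (1), while the lower bound in (2) is supplied by Kadison--Schwarz.
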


\begin{proof}
For any $k\geq 0$ and $x,y\in A_k$, the sequence $\rho_n(\rho_{n,k}(x)\rho_{n,k}(y))$ is Cauchy by \cref{def: encoding} and hence converges in $\Lima$. Applying this coordinate-wise shows the same for $r\geq 1$. 

For \eqref{it: at infty 3}, we claim first that for any fixed $k\geq 0$, $r\geq 1$, $z\in \M_r(A_k)$, and $\varepsilon_0>0$, there exists $M>k$ so that for all $m>n>M$, 
    \begin{align}\label{eqn: everything else}
    \|\rho^{(r)}_{m,n}\big(\rho^{(r)}_{n,k}(z)^*\rho^{(r)}_{n,k}(z)\big)\|&<\|\rho^{(r)}_{m,k}(z)\|^2 +\varepsilon_0. 
    \end{align}
Indeed, choose $\eta_0>0$ so that $2\|z\|\eta_0+\eta_0^2<\varepsilon_0$. Since $(\|\rho^{(r)}_{n,k}(z)\|)_{n>k}$ is bounded and decreasing, it converges and hence there exists $M>k$ so that $\|\rho^{(r)}_{n,k}(z)\|< \|\rho^{(r)}_{m,k}(z)\|+\eta_0$ for all $m>n>M$. Then for all $m>n>M$ we have 
    \begin{align*}
       \|\rho^{(r)}_{m,n}\big(\rho^{(r)}_{n,k}(z)^*\rho^{(r)}_{n,k}(z)\big)\|&\leq \|\rho^{(r)}_{n,k}(z)^*\rho^{(r)}_{n,k}(z)\|\\
        &=\|\rho^{(r)}_{n,k}(z)\|^2\\
        &<\big(\|\rho^{(r)}_{m,k}(z)\|+\eta_0\big)^2\\
        &=\|\rho^{(r)}_{m,k}(z)\|^2+2\|\rho^{(r)}_{m,k}(z)\|\eta_0 +\eta_0^2\\
        &\leq \|\rho^{(r)}_{m,k}(z)\|^2+2\|z\|\eta_0 +\eta_0^2\\
        &<\|\rho^{(r)}_{m,k}(z)\|^2+\varepsilon_0.
   \end{align*}
   This establishes \eqref{eqn: everything else}.

Now, set $\varepsilon>0$, $r\geq 1$, $k\geq 0$, and $x,y\in  \M_r(A_k)$. 
 Choose $\eta>0$ so that $\|x\|\|y\|\eta+\eta^2< \varepsilon$. By \eqref{eqn: everything else}, we may choose $M>k$ so that for all $m>n>M$, 
\begin{align*}
    \|\rho^{(r)}_{m,n}\big(\rho^{(r)}_{n,k}(x)\rho^{(r)}_{n,k}(x)^*\big)\|&<\|\rho^{(r)}_{m,k}(x)^*\|^2 +\eta=\|\rho^{(r)}_{m,k}(x)\|^2 +\eta\ \text{ and }\\
    \|\rho^{(r)}_{m,n}\big(\rho^{(r)}_{n,k}(y)^*\rho^{(r)}_{n,k}(y)\big)\|&<\|\rho^{(r)}_{m,k}(y)\|^2 +\eta.
\end{align*}
Then applying 
a Cauchy-Schwarz type inequality for c.p.\ maps (a consequence of Stinespring's dilation theorem), 
we have for all $m>n>M$,
\begin{align*}
    \|\rho^{(r)}_{m,n}\big(\rho^{(r)}_{n,k}(x)\rho^{(r)}_{n,k}(y)\big)\|^2&\leq \|\rho^{(r)}_{m,n}\big(\rho^{(r)}_{n,k}(x)\rho^{(r)}_{n,k}(x)^*\big)\|\|\rho^{(r)}_{m,n}\big(\rho^{(r)}_{n,k}(y)^*\rho^{(r)}_{n,k}(y)\big)\|\\
    &< \big(\|\rho^{(r)}_{m,k}(x)\|^2 +\eta\big)\big(\|\rho^{(r)}_{m,k}(y)\|^2 +\eta\big)\\
    &\leq \|\rho^{(r)}_{m,k}(x)\|^2\|\rho^{(r)}_{m,k}(y)\|^2 +\|x\|\|y\|\eta +\eta^2\\
    &<\|\rho^{(r)}_{m,k}(x)\|^2\|\rho^{(r)}_{m,k}(y)\|^2 +\varepsilon. 
\end{align*}  
For (2) note that for any $k\geq 0$, $x\in A_k$, and $n\geq k$,
\begin{align*}
\|\rho^{(r)}_k(x)\|^2&=\|\rho^{(r)}_k(x)^*\rho^{(r)}_k(x)\|\\
&=\|\rho^{(r)}_n(\rho^{(r)}_{n,k}(x)^*)\rho^{(r)}_n(\rho^{(r)}_{n,k}(x))\|\\
&\leq \|\rho^{(r)}_n(\rho^{(r)}_{n,k}(x)^*\rho^{(r)}_{n,k}(x))\|,
\end{align*}
and so (2) follows from (1).
\end{proof}




\begin{proposition}\label{prop: encoding systems are cstar}
Let $(A_n,\rho_{n+1,n})$ be a $\Cstar$-encoding system with limit $\Lima$. Then there exists an associative bilinear map $\sbt\colon \Lima \times \Lima \to \Lima$, given on $\bigcup_n \rho_n(A_n)$ by
\begin{align*}
    \rho_k(x)\sbt \rho_k(y) & = \lim_n \rho_n(\rho_{n,k}(x) \rho_{n,k}(y))
\end{align*} 
 for each $k\geq 0$ and $x,y\in A_k$, 
so that when equipped with this product, $\Lima$ is a $\Cstar$-algebra with the same involution and norm inherited as a subspace of $A_\infty$. 
We denote this $\Cstar$-algebra by $\Cstar_{\sbt}(\Lima)$. 
\end{proposition}
Note that $\Cstar_{\sbt}(\Lima)$ is just the closed involutive Banach space $\Lima\subset A_\infty$ equipped with a product, $\sbt$, that likely does not agree with multiplication on $A_\infty$. We use the usual concatenation to denote multiplication in $A_\infty$ (i.e., $\bar{x}\bar{y}$) and $\sbt$ to denote the multiplication in $\Cstar_{\sbt}(\Lima)$ (i.e., $\bar{x}\sbt \bar{y}$). 

\begin{proof}
We begin by defining $\sbt\colon \bigcup_n \rho_n(A_n) \times \bigcup_n \rho_n(A_n) \to \Lima$. 
Since the union $\bigcup_n \rho_n(A_n)$ is nested, 
for any $\bar{x},\bar{y}\in \bigcup_n \rho_n(A_n)$, 
we may choose $k\geq 0$ and $x,y\in A_k$ so that $\bar{x}=\rho_k(x)$ and $\bar{y}=\rho_k(y)$, and so we define $\bar{x}\sbt \bar{y}$ by 
\begin{align*}
     \bar{x}\sbt \bar{y} & \coloneqq \lim_n \rho_n(\rho_{n,k}(x) \rho_{n,k}(y)).
\end{align*}
The limit exists by \cref{prop: observations in infty}. Moreover, for any lifts $(x_n)_n, (y_n)_n\in \prod_n A_n$ of $\bar{x}$ and $\bar{y}$ and any $\varepsilon>0$, there exists $M>k$ with \[\sup_{n>M}\|\rho_n(\rho_{n,k}(x)\rho_{n,k}(y))-\rho_n(x_ny_n)\|\leq \sup_{n>M} \|\rho_{n,k}(x)\rho_{n,k}(y)-x_ny_n\|<\varepsilon.\]
Hence the limit is independent of the choice of lifts of $\bar{x}$ and $\bar{y}$, and so we write
\begin{align}
   \rho_k(x)\sbt \rho_k(y)=\lim_n \rho_n(\rho_{n,k}(x) \rho_{n,k}(y))\label{eq: prod on dense}
\end{align}
for each  $k\geq 0$ and  $x,y\in A_k$. 
One readily checks that $\sbt$ is also bilinear.

From \cref{prop: observations in infty}\eqref{it: at infty 3} we know that for any $k\geq 0$ and $x,y\in A_k$
\begin{align}\label{there}
    \|\rho_k(x)\sbt\rho_k(y)\|\leq \|\rho_k(x)\|\|\rho_k(y)\|,
\end{align}
which implies $\sbt$ is bounded, and so we may extend it to a bounded bilinear map $\sbt\colon \Lima\times \Lima\to \Lima$ so that for all $\bar{x},\bar{y}\in \Lima$
\begin{align}\label{eqn: Bstar}
    \|\bar{x}\sbt \bar{y}\| 
    \leq 
    \|\bar{x}\|\|\bar{y}\|.
\end{align} 

Recall from \cite[Lemma 2.3]{CW1}\footnote{Although \cite[Lemma 2.3]{CW1}  was stated for finite-dimensional $\Cstar$-algebras, that played no role in the proof.} 
that for any $\bar{x}\in \Lima$ and any lift $(x_n)_n\in \prod_n A_n$ of $x$, we have $\bar{x}=\lim_n\rho_n(x_n)$. It follows that for any $\bar{x},\bar{y}\in \Lima$ we can express $\bar{x}\sbt\bar{y}$ as 
\begin{align}
    \bar{x}\sbt \bar{y} =\lim_n \rho_n(x_n)\sbt \rho_n(y_n), \label{bullet at closure}
\end{align}
where $(x_n)_n, (y_n)_n\in \prod_n A_n$ are  lifts of $\bar{x}$ and $\bar{x}$, respectively. 

To show that we have a product, it remains to check associativity. To that end, we first check that for fixed $k\geq 0$ and $x,y,z\in A_k$,
\begin{align*}(\rho_k(x)\sbt \rho_k(y))\sbt \rho_k(z)= \rho_k(x)\sbt(\rho_k(y)\sbt \rho_k(z)). 
\end{align*}
Since $\sbt$ is bounded, we can use \eqref{eq: prod on dense} to rewrite 
\begin{align}
(\rho_k(x)\sbt \rho_k(y))\sbt \rho_k(z)&= \left(\lim_n\rho_n(\rho_{n,k}(x)\rho_{n,k}(y))\right)\sbt \rho_k(z)\label{eq: lim 0}\\
&=\lim_n\big(\rho_n(\rho_{n,k}(x)\rho_{n,k}(y))\sbt\rho_k(z)\big)\ \text{ and}\nonumber\\ 
\rho_k(x)\sbt(\rho_k(y)\sbt \rho_k(z))&= \rho_k(z)\sbt \left(\lim_n\rho_n(\rho_{n,k}(y)\rho_{n,k}(z))\right)\label{eq: lim 1}\\
&=\lim_n\big(\rho_k(x)\sbt \rho_n(\rho_{n,k}(y)\rho_{n,k}(z)\big). \nonumber
\end{align}
Fix $\varepsilon>0$. Using \eqref{eq: lim 0} and \eqref{eq: lim 1}, we choose $N>k$ so that for all $n>N$, 
\begin{align}
\|\rho_n(\rho_{n,k}(x)\rho_{n,k}(y))\sbt\rho_k(z)-(\rho_k(x)\sbt \rho_k(y))\sbt \rho_k(z)\|&<\sfrac{\varepsilon}{5},\label{eq: lim 2} \ \text{ and}\\ 
\|\rho_k(x)\sbt \rho_n(\rho_{n,k}(y)\rho_{n,k}(z))-\rho_k(x)\sbt (\rho_k(y)\sbt \rho_k(z))\|&<\sfrac{\varepsilon}{5}\label{eq: lim 3}
\end{align}
By \eqref{eqn: asym associative}, we can choose $M>N$ so that for all $m>n>M$
\begin{align}\label{eq: lim 4}
     \left\|\rho_m\big(\rho_{m,n}\big(\rho_{n,k}(x)\rho_{n,k}(y)\big)\rho_{m,k}(z)\big) -\rho_m\big(\rho_{m,k}(x)\rho_{m,n}\big(\rho_{n,k}(y)\rho_{n,k}(z)\big)\big)\right\|<\sfrac{\varepsilon}{5}, 
\end{align}
Fix $n>M>N$. Using \eqref{eq: prod on dense} on $\rho_n(\rho_{n,k}(x)\rho_{n,k}(y))\sbt\rho_k(z)$ and\\ $\rho_k(x)\sbt \rho_n(\rho_{n,k}(y)\rho_{n,k}(z))$, we can choose $m>n$ so that 
\begin{align}
    \left\|\rho_m\big(\rho_{m,n}\big(\rho_{n,k}(x)\rho_{n,k}(y)\big)\rho_{m,k}(z)\big)-\rho_n(\rho_{n,k}(x)\rho_{n,k}(y))\sbt\rho_k(z)\right\|&<\sfrac{\varepsilon}{5},\label{eq: lim 5}
    \end{align}
    and
    \begin{align}
    \left\|\rho_m\big(\rho_{m,k}(x)\rho_{m,n}\big(\rho_{n,k}(y)\rho_{n,k}(z)\big)\big)-\rho_k(x)\sbt \rho_n(\rho_{n,k}(y)\rho_{n,k}(z))\right\|&<\sfrac{\varepsilon}{5}.\label{eq: lim 6}
\end{align}
For our fixed $m>n>M>N$, we use \eqref{eqn: Bstar} and bilinearity to estimate
\begin{align*}
     &\phantom{\overset{\eqref{eq: lim 2},\eqref{eq: lim 3}}{<}} \|(\rho_k(x)\sbt \rho_k(y))\sbt \rho_k(z)- \rho_k(x)\sbt(\rho_k(y)\sbt \rho_k(z))\|\\
    &\overset{\eqref{eq: lim 2},\eqref{eq: lim 3}}{<} \|\rho_n(\rho_{n,k}(x)\rho_{n,k}(y))\sbt\rho_k(z)-\rho_k(x)\sbt \rho_n(\rho_{n,k}(y)\rho_{n,k}(z))\| +  2(\sfrac{\varepsilon}{5})\\
   &\overset{\eqref{eq: lim 5},\eqref{eq: lim 6}}{<}  \|\rho_m\big(\rho_{m,n}\big(\rho_{n,k}(x)\rho_{n,k}(y)\big)\rho_{m,k}(z)\big) -\rho_m\big(\rho_{m,k}(x)\rho_{m,n}\big(\rho_{n,k}(y)\rho_{n,k}(z)\big)\big)\| \\ & \phantom{----}  +  4(\sfrac{\varepsilon}{5})\\
  &\overset{\phantom{--}\eqref{eq: lim 4}\phantom{--}}{<}  \varepsilon.
\end{align*}
Since $\varepsilon$ was arbitrary, this proves that \begin{align}\label{eq: ass on dense}
    (\rho_k(x)\sbt \rho_k(y))\sbt \rho_k(z)= \rho_k(x)\sbt(\rho_k(y)\sbt \rho_k(z)).
\end{align}

For general $\bar{x},\bar{y},\bar{z}\in \Lima$, let $(x_n)_n,(y_n)_n, (z_n)_n\in \prod_n A_n$ be respective lifts. Then as before, $\lim_n\rho_n(x_n) = \bar{x}$, $\lim_n\rho_n(y_n) =\bar{y}$, and $\lim_n\rho_n(z_n) =\bar{z}$. 
Then it follows from bilinearity, \eqref{eqn: Bstar}, and \eqref{eq: ass on dense} that 
\begin{align*}
    (\bar{x}\sbt\bar{y})\sbt\bar{z} &=\lim_n\big(\rho_n(x_n)\sbt\rho_n(y_n)\big)\sbt \rho_n(z_n)
    =\lim_n\rho_n(x_n)\sbt\big(\rho_n(y_n)\sbt \rho_n(z_n)\big) 
    =\bar{x}\sbt(\bar{y}\sbt\bar{z}). 
\end{align*}

Next we check that $(\Lima, \sbt)$ is a $^*$-algebra with respect to the $^*$-operation on $A_\infty$.
For $k\geq 0$ and $x,y\in A_k$, we have 
\begin{align*}
    \rho_k(y)^*\sbt \rho_k(x)^*&=\lim_n\rho_n\big(\rho_{n,k}(y)^*\rho_{n,k}(x)^*\big)=\lim_n\rho_n\big(\rho_{n,k}(x)\rho_{n,k}(y))\big)^*\\
    &=\left(\lim_n\rho_n\big(\rho_{n,k}(x)\rho_{n,k}(y)\big)\right)^*=(\rho_k(x)\sbt\rho_k(y))^*,
\end{align*}
and so by continuity $(\bar{x}\sbt\bar{y})^*=\bar{x}^*\sbt\bar{y}^*$ for all $\bar{x},\bar{y}\in \Lima$.

Hence  $(\Lima, \sbt)$ is a $^*$-algebra, and by \eqref{eqn: Bstar}, $(\Lima, \sbt, \|\cdot\|_{A_\infty})$ is moreover a Banach $^*$-algebra.
 \cref{prop: observations in infty}\eqref{it: at infty 4} tells us that the $\Cstar$-identity holds on the dense subspace $\bigcup_n\rho_n(A_n)$ of $\Lima$ with the $\sbt$-multiplication. 
 Again by continuity of $\sbt$, it follows that for all $\bar{x}\in \Lima$
\begin{align}\label{here} 
\|\bar{x}\|^2
=\|\bar{x}^*\sbt\bar{x}\|. 
\end{align}
Hence $\Lima$ equipped the product $\sbt$ and with the involution and norm from $A_\infty$ is a $\Cstar$-algebra. 
\end{proof}

As involutive Banach spaces, $\Lima$ and $\Cstar_{\sbt}(\Lima)$ are equal. It turns out that they are moreover completely order isomorphic via the identity map. 

\begin{proposition}\label{prop: manifesting are coi'}
Let $(A_n,\rho_{n+1,n})_n$ be a $\Cstar$-encoding system. Then the map
\begin{align*}
    \Theta\coloneqq\mathrm{id}_{\Lima}\colon \Cstar_{\sbt}(\Lima)\to A_\infty
\end{align*} is a complete order embedding. 
\end{proposition}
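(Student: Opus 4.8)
The plan is to verify the two defining properties of a complete order embedding in \cref{def: cp}: that $\Theta$ is completely positive and completely isometric. Since $\Theta$ is the identity map on the involutive Banach space $\Lima$, and since by \cref{prop: encoding systems are cstar} the norm and involution of $\Cstar_{\sbt}(\Lima)$ are exactly those inherited from $A_\infty$, the map $\Theta$ is already $^*$-linear and isometric at the ground level. Two things remain: (a) promote this isometry to every matrix amplification, i.e.\ show that the $\Cstar$-norm of $\M_r(\Cstar_{\sbt}(\Lima))$ equals the norm it carries as a subspace of $\M_r(A_\infty)$; and (b) show each $\Theta^{(r)}$ is positive. I expect (a) to be the point requiring the most care, because it genuinely cannot be read off from the ground-level isometry: deducing it via the ``inverse on the image'' would require the reverse positivity statement, whose automaticity (\cref{rmk: surj coi}) already presumes $\Theta$ is a complete order embedding. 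Instead, (a) reduces to promoting \cref{prop: encoding systems are cstar} to the matrix level.

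For (a), I would fix $r \ge 1$ and let $\sbt^{(r)}$ denote the matrix amplification of $\sbt$. By \cref{prop: observations in infty}, which holds for all $r$, the sequence $\rho^{(r)}_n(\rho^{(r)}_{n,k}(x)\rho^{(r)}_{n,k}(y))$ converges in $\M_r(\Lima)$, and an entrywise computation from \eqref{eq: prod on dense} identifies its limit with $\rho^{(r)}_k(x)\sbt^{(r)}\rho^{(r)}_k(y)$. Now associativity of $\sbt^{(r)}$ and the fact that it makes $\M_r(\Lima)$ a $^*$-algebra are formal consequences of the corresponding facts for $\sbt$ on $\Lima$; the estimate $\|X\sbt^{(r)}Y\| \le \|X\|\,\|Y\|$ follows from \cref{prop: observations in infty}\eqref{it: at infty 3} on the dense subspace $\bigcup_k\rho^{(r)}_k(\M_r(A_k))$ and continuity; and the $\Cstar$-identity holds on that dense subspace by \cref{prop: observations in infty}\eqref{it: at infty 4} and extends by continuity exactly as at the end of the proof of \cref{prop: encoding systems are cstar}. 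Since $\M_r(\Lima)$ is closed in the $\Cstar$-algebra $\M_r(A_\infty)$, hence complete, $(\M_r(\Lima),\sbt^{(r)},\|\cdot\|_{A_\infty})$ is a $\Cstar$-algebra whose underlying $^*$-algebra is $\M_r(\Cstar_{\sbt}(\Lima))$. As a $^*$-algebra admits at most one $\Cstar$-norm, the norm of $\M_r(\Cstar_{\sbt}(\Lima))$ must be the inherited one, so $\Theta$ is completely isometric.

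For (b), I would fix $r$ and a positive $P \in \M_r(\Cstar_{\sbt}(\Lima))$, write $P = X^*\sbt^{(r)}X$ with $X \in \M_r(\Lima)$, and approximate $X$ by elements $\rho^{(r)}_{k_j}(x_j)$ with $x_j \in \M_r(A_{k_j})$, using density of $\bigcup_k\rho^{(r)}_k(\M_r(A_k))$ in $\M_r(\Lima)$. Continuity of $\sbt^{(r)}$ and the involution, the $^*$-linearity of the connecting maps, and the amplified form of \eqref{eq: prod on dense} then yield
\[
 P \;=\; \lim_j \lim_n \rho^{(r)}_n\big(\rho^{(r)}_{n,k_j}(x_j)^*\,\rho^{(r)}_{n,k_j}(x_j)\big),
\]
and each inner term is the image under the completely positive map $\rho^{(r)}_n$ of a positive element of $\M_r(A_n)$, hence positive in $\M_r(A_\infty)$; since the positive cone there is norm closed, $P=\Theta^{(r)}(P)$ is positive in $\M_r(A_\infty)$. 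This is the one step where the complete positivity of the connecting maps enters. With (a) and (b) in hand, $\Theta$ is completely positive and completely isometric, hence a complete order embedding.
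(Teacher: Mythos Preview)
Your proposal is correct and takes essentially the same approach as the paper's proof: both establish complete isometry by showing that the $A_\infty$-norm on $\M_r(\Lima)$ is a $\Cstar$-norm for the $\sbt$-multiplication (via \cref{prop: observations in infty} and continuity) and then invoke uniqueness of $\Cstar$-norms, and both establish complete positivity by writing a positive element as $X^*\sbt^{(r)} X$, approximating $X$ from $\bigcup_k\rho_k^{(r)}(\M_r(A_k))$, and using that each approximant is a limit of images of positives under c.p.\ maps. The only cosmetic difference is that the paper approximates via a single lift $(x_n)_n$ and the formula $\bar{x}=\lim_n\rho_n^{(r)}(x_n)$, whereas you use a sequence $\rho_{k_j}^{(r)}(x_j)$; these amount to the same thing.
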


\begin{proof}
By Remark \ref{rmk: surj coi}(i), it suffices to show that it is c.p.\ and completely isometric. Fix $r\geq 1$. 
To see that $\Theta^{(r)}$ is isometric, it suffices to show that the norm on $\M_r(\Cstar_{\sbt}(\Lima))$ agrees with the norm on $\M_r(A_\infty)$.  
 Since $\Lima$ and $\Cstar_{\sbt}(\Lima)$ agree as sets, it follows from \eqref{Mr} that 
 \begin{align}
     \M_r(\Cstar_{\sbt}(\Lima))=\M_r(\Lima)=\overline{\textstyle{\bigcup_n} \rho^{(r)}_n(\M_r(A_n))}^{\|\cdot\|_{\M_r(A_\infty)}}.\label{eq: coi'}
 \end{align}
  In particular, $\M_r(\Cstar_{\sbt}(\Lima))$ is complete with respect to the norm on $\M_r(A_\infty)$, and so to show that these norms agree, it suffices to show that $\|\cdot\|_{\M_r(A_\infty)}$ is a $\Cstar$-norm for $\M_r(\Cstar_{\sbt}(\Lima))$ (meaning with respect to the $\sbt$-multiplication).
  Notice that on $\M_r(\Cstar_{\sbt}(\Lima))$, the product $\sbt$ satisfies an amplified version of \eqref{bullet at closure}, i.e., 
for any $\bar{x}, \bar{y}\in \Lima$ and any lifts $(x_n)_n,(y_n)_n\in \prod_n \M_r(A_n)$ lifts of $\bar{x}$ and $\bar{y}$, respectively,
\[\bar{x}\sbt\bar{y} =\lim_n\rho_n^{(r)}(x_n)\sbt\rho_n^{(r)}(y_n),\]
and moreover, this operation is also bounded as in \eqref{there}. 
Then, just as in \eqref{eqn: Bstar} and \eqref{here}, we can leverage continuity along with \cref{prop: observations in infty}(i) and (ii) to conclude that $\|\cdot\|_{A_\infty}$ defines a Banach algebra norm on $\M_r(\Cstar_{\sbt}(\Lima))$ which moreover satisfies the $\Cstar$-identity. Hence the norm on $\M_r(\Cstar_{\sbt}(\Lima))$ agrees with the norm on $\M_r(A_\infty)$, and $\Theta^{(r)}$ is isometric. 

Next we show that $\Theta^{(r)}$ is positive, i.e., for any fixed $\bar{x}\in \M_r(\Lima)$ we have $\bar{x}^*\sbt \bar{x}\in \M_r(A_\infty)_+$ . Let 
$(x_n)_n\in \prod_n\M_r(A_n)$ be a lift of $\bar{x}$. Then $\lim_n\rho_n^{(r)}(x_n)=\bar{x}$ and $\lim_n\rho_n^{(r)}(x_n)^*=\bar{x}^*$ and so
\[
\bar{x}^*\sbt \bar{x}=\lim_n \rho_n^{(r)}(x_n)^*\sbt \rho_n^{(r)}(x_n)=\lim_m \lim_n\rho_m^{(r)}\big(\rho_{m,n}^{(r)}(x_n)^*\rho_{m,n}^{(r)}(x_n)\big)\in \M_r(A_\infty)_+. \qedhere
\]
\end{proof}

It now follows from Remark \ref{rmk: surj coi}(ii), that $\Cstar_{\sbt}(\Lima)$ is unique up to $^*$- isomorphism in the following sense:

\begin{corollary}\label{cor: limit is Cstar}
Let $(A_n,\rho_{n+1,n})_n$ be a $\Cstar$-encoding 
system and $\Theta\colon \Cstar_{\sbt}(\Lima)\\ \to A_\infty$ as in \cref{prop: manifesting are coi'}. Then for any $\Cstar$-algebra $A$ and complete order embedding $\psi\colon A\to A_\infty$ with $\psi(A)=\Lima$, the map $\Theta^{-1}\circ \psi\colon A\to \Cstar_{\sbt}(\Lima)$ is a $^*$-isomorphism.
\end{corollary}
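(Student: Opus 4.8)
The plan is to invoke \cref{rmk: surj coi}(ii) directly, after identifying which object plays which role. In the notation of that remark, take the ambient $\Cstar$-algebra to be $A_\infty$, take the source $\Cstar$-algebra ``$A$'' of the remark to be $\Cstar_{\sbt}(\Lima)$ — which is a genuine $\Cstar$-algebra by \cref{prop: encoding systems are cstar} — and take the complete order embedding ``$\theta$'' to be $\Theta$, which is a complete order embedding by \cref{prop: manifesting are coi'}. Since $\Theta$ is by definition the identity map on the underlying involutive Banach space $\Lima$, it is injective and its image is exactly $\Lima$, so $\Theta^{-1}\colon\Lima\to\Cstar_{\sbt}(\Lima)$ makes sense.

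Now let $A$ be any $\Cstar$-algebra and $\psi\colon A\to A_\infty$ a complete order embedding with $\psi(A)=\Lima$. Then $\psi(A)=\Lima=\Theta\big(\Cstar_{\sbt}(\Lima)\big)$, so the composite $\Theta^{-1}\circ\psi\colon A\to\Cstar_{\sbt}(\Lima)$ is well defined, and the hypotheses of \cref{rmk: surj coi}(ii) are met with ``$C$'' $=A$ and ``$\psi$'' $=\psi$. That remark then yields that $\Theta^{-1}\circ\psi$ is a surjective complete order embedding between the $\Cstar$-algebras $A$ and $\Cstar_{\sbt}(\Lima)$, hence — by \cref{rmk: surj coi}(i) together with \cite[Theorem II.6.9.17]{Bla06}, exactly as recorded in \cref{rmk: surj coi}(ii) — a $^*$-isomorphism.

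There is no substantive obstacle: all the analytic content has already been carried out in \cref{prop: encoding systems are cstar} (which produces the $\Cstar$-structure $\Cstar_{\sbt}(\Lima)$) and in \cref{prop: manifesting are coi'} (which shows the identity inclusion of this $\Cstar$-algebra into $A_\infty$ is a complete order embedding). The only thing to be careful about is the bookkeeping of roles in \cref{rmk: surj coi}(ii) — in particular the observation that $\Theta$, being the identity map, has image precisely $\Lima$, so that it can legitimately be matched against the hypothesis $\psi(A)=\Lima$.
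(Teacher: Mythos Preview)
Your proposal is correct and is precisely the paper's own argument: the corollary is stated immediately after the sentence ``It now follows from Remark~\ref{rmk: surj coi}(ii)\ldots'', and you have simply unpacked that citation by assigning the roles $B=A_\infty$, $\theta=\Theta$, $A=\Cstar_{\sbt}(\Lima)$, $C=A$, $\psi=\psi$ exactly as intended. There is nothing to add.
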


Because of the robustness of this correspondence, we are justified in also viewing $\Cstar_{\sbt}(\Lima)$ as the limit of the system. For the sake of clarity, we will refer to it as the $\Cstar$-limit.

\begin{definition}
For a $\Cstar$-encoding system $(A_n,\rho_{n+1,n})_n$, we call $\Cstar_{\sbt}(\Lima)$ from \cref{prop: encoding systems are cstar} the \emph{$\Cstar$-limit} of the system.
\end{definition}

Unlike in \cite{CW1}, we have no need to employ the element $e\coloneqq[(\rho_{n+1,n}(1_{A_n}))_n]\in A_\infty$ in our proofs. Nonetheless, if it happens to lie in the limit of a $\Cstar$-encoding system then it will automatically be the unit of the $\Cstar$-limit of the system. The proof is essentially the one given in \cite[Lemma 2.5(ii)]{CW1}; however since \cite[Lemma 2.5]{CW1} is stated for $\CPCstar$-systems and where $e$ is possibly not in $\Lima$, we give the proof here in our context.

\begin{proposition}\label{prop: e unit}
Let $(A_n,\rho_{n+1,n})_n$ be a $\Cstar$-encoding system with unital $\Cstar$-algebras $A_n$, and set $e\coloneqq[(\rho_{n+1,n}(1_{A_n}))_n]\in A_\infty$. 
If $e\in \Lima$, then $e$ is the unit for $\Cstar_{\sbt}(\Lima)$. 
\end{proposition}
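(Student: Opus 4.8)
The plan is to show that $\rho_j(1_{A_j})\to e$ in $\Cstar_\sbt(\Lima)$ and that $\rho_j(1_{A_j})\sbt\rho_k(x)\to\rho_k(x)$ for all $k\ge0$ and $x\in A_k$. Granting these, continuity of $\sbt$ and density of $\bigcup_n\rho_n(A_n)$ give $e\sbt\bar x=\lim_j\rho_j(1_{A_j})\sbt\bar x=\bar x$ for every $\bar x\in\Lima$; since $e=e^*$ (each $\rho_{n,n-1}(1_{A_{n-1}})$ being self-adjoint), taking adjoints gives $\bar x\sbt e=\bar x$ as well, so $e$ is the unit of $\Cstar_\sbt(\Lima)$.

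For the first convergence I would use that $\rho_n\circ\rho_{n,n-1}=\rho_{n-1}$: the sequence $n\mapsto e_n:=\rho_{n,n-1}(1_{A_{n-1}})\in A_n$ represents $e$ and satisfies $\rho_n(e_n)=\rho_{n-1}(1_{A_{n-1}})$, so since $e\in\Lima$, \cite[Lemma 2.3]{CW1} yields $\rho_{n-1}(1_{A_{n-1}})=\rho_n(e_n)\to e$. For the second convergence, the essential point is a Schwarz-type inequality for $\rho_j$ viewed as a map into the $\Cstar$-algebra $\Cstar_\sbt(\Lima)$. Each $\rho_j$ is c.p.c.\ as a map into $\Cstar_\sbt(\Lima)$: it is c.p.c.\ into $A_\infty$ with range in $\Lima$, and by \cref{prop: manifesting are coi'} and Remark \ref{rmk: surj coi}(i) an element of $\M_r(\Lima)$ is positive in $\M_r(\Cstar_\sbt(\Lima))$ exactly when it is positive in $\M_r(A_\infty)$, with equal norms. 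Applying the Schwarz inequality for the c.p.c.\ map $\rho_j^{(2)}\colon\M_2(A_j)\to\M_2(\Cstar_\sbt(\Lima))$ to $\left(\begin{smallmatrix}1_{A_j}&b\\0&0\end{smallmatrix}\right)$, and using that a positive $2\times2$ matrix $\left(\begin{smallmatrix}A&C\\C^*&D\end{smallmatrix}\right)$ over a $\Cstar$-algebra satisfies $\|C\|^2\le\|A\|\,\|D\|$, one gets for all $b\in A_j$
\[
\big\|\rho_j(b)-\rho_j(1_{A_j})\sbt\rho_j(b)\big\|^2\le\big\|\rho_j(1_{A_j})-\rho_j(1_{A_j})\sbt\rho_j(1_{A_j})\big\|\,\big\|\rho_j(b^*b)-\rho_j(b)^*\sbt\rho_j(b)\big\|\le\tfrac14\big\|\rho_j(b^*b)-\rho_j(b)^*\sbt\rho_j(b)\big\|,
\]
where the last step uses that $\rho_j(1_{A_j})$ is a positive contraction.

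To conclude, I would fix $k\ge0$ and $x\in A_k$ and apply the displayed inequality with $b=\rho_{j,k}(x)$ for $j\ge k$; since $\rho_j(\rho_{j,k}(x))=\rho_k(x)$ this reads
\[
\big\|\rho_j(1_{A_j})\sbt\rho_k(x)-\rho_k(x)\big\|^2\le\tfrac14\big\|\rho_j\big(\rho_{j,k}(x)^*\rho_{j,k}(x)\big)-\rho_k(x)^*\sbt\rho_k(x)\big\|.
\]
As $j\to\infty$ the left side tends to $\|e\sbt\rho_k(x)-\rho_k(x)\|^2$ by the first convergence and continuity of $\sbt$, while by \eqref{eq: prod on dense} applied to $x^*$ and $x$ we have $\rho_j(\rho_{j,k}(x)^*\rho_{j,k}(x))\to\rho_k(x)^*\sbt\rho_k(x)$, so the right side tends to $0$. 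Hence $e\sbt\rho_k(x)=\rho_k(x)$, completing the argument. The main obstacle is the first half of the previous paragraph: one needs the Schwarz inequality for completely positive maps to be available in $\Cstar_\sbt(\Lima)$, even though its product may differ from that of $A_\infty$, and this is precisely what \cref{prop: manifesting are coi'} secures; once $\rho_j$ is known to be completely positive into $\Cstar_\sbt(\Lima)$, identifying $\lim_j\rho_j(1_{A_j})$ with $e$ and reading off the vanishing of the right-hand side above from \eqref{eq: prod on dense} are routine.
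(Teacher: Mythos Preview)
Your argument is correct, but it follows a genuinely different route from the paper's. The paper proceeds purely order-theoretically: it first observes (as you do) that $e=\lim_n\rho_n(1_{A_n})$ via \cite[Lemma 2.3]{CW1}, and then shows that $e-\bar{x}\in(A_\infty)_+$ for every self-adjoint contraction $\bar{x}\in\Lima$ by the simple pointwise estimate $\|\rho_{n,k}(x)\|\,1_{A_n}\ge\rho_{n,k}(x)$, which pushes forward under $\rho_n$ and passes to the limit. Transferring this via \cref{prop: manifesting are coi'} gives $e\ge\bar{x}$ in $\Cstar_\sbt(\Lima)$ for all self-adjoint contractions $\bar{x}$, and the general $\Cstar$-fact that a positive contraction dominating every self-adjoint contraction must be the unit finishes the proof.

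Your approach instead verifies the multiplicative identity $e\sbt\bar{x}=\bar{x}$ directly, and the engine is the Kadison--Schwarz inequality for $\rho_j$ viewed as a c.p.c.\ map into $\Cstar_\sbt(\Lima)$. This is a nice observation: once one knows (from \cref{prop: manifesting are coi'} and Remark~\ref{rmk: surj coi}(i)) that positivity and norms in $\M_r(\Lima)$ agree whether computed in $A_\infty$ or in $\Cstar_\sbt(\Lima)$, the $2\times 2$ Schwarz trick with $\left(\begin{smallmatrix}1_{A_j}&b\\0&0\end{smallmatrix}\right)$ gives exactly the off-diagonal control you need, and the right-hand side vanishes precisely because of the defining formula \eqref{eq: prod on dense} for $\sbt$. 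The paper's argument is shorter and never touches the product $\sbt$ beyond invoking \cref{prop: manifesting are coi'}, whereas yours uses the construction of $\sbt$ more intrinsically; in exchange, your proof is self-contained at the final step and does not appeal to the order-unit characterization of the identity in a $\Cstar$-algebra.
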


\begin{proof}
Suppose $e\in \Lima$. Then by 
\cite[Lemma 2.3]{CW1}, 
\[e=\lim_n\rho_{n+1}(\rho_{n+1,n}(1_{A_n}))=\lim_n \rho_n(1_{A_n}). 
\] 
We claim that
$e- \bar{x} \in (A_\infty)_+$ for all self-adjoint $\bar{x}\in \Lima$ with $\|\bar{x}\|\leq 1$. 
 Indeed, using again \cite[Lemma 2.3]{CW1}, it suffices to prove the claim for $\bar{x}=\rho_k(x)$ for a fixed $k\geq 0$ and self-adjoint $x\in A_k$. 
Since $\|\rho_{n,k}(x)\|1_{A_n}\geq \rho_{n,k}(x)$ for each $n>k$, it follows that $\|\rho_{n,k}(x)\|\rho_n(1_{A_n})\geq \rho_n(\rho_{n,k}(x))=\rho_k(x)$ for all $n>k$. Hence $\|\rho_{n,k}(x)\| e \geq \rho_k(x)$ for all $n>k$ and so $\|\rho_k(x)\|e\geq \rho_k(x)$. 

With \cref{prop: manifesting are coi'} it follows that $e-\bar{x}\in \Cstar_{\sbt}(\Lima)_+$ for all self-adjoint $\bar{x}\in \Cstar_{\sbt}(\Lima)^1$, and hence 
$e$ is the unit of $\Cstar_{\sbt}(\Lima)$. 
\end{proof}

\begin{remark}
For a $\Cstar$-encoding system $(A_n,\rho_{n+1,n})_n$, if $e\in \Lima$, then \begin{align*}(\Lima, \{\M_r(\Lima)\cap \M_r(A_\infty)_+\}_r,e)\\=(\Lima, \{\M_r(\Cstar_{\sbt}(\Lima))_+\}_r,e)\end{align*} is an abstract operator system in the sense of Choi and Effros (\cite[Chapter 13]{Pau02}). \end{remark}

\section{$\Cstar$-encoding systems and nuclearity}
In this section, we consider finite-dimensional c.p.c.\ systems, and under this restriction, we will be able to characterize when the limit of a c.p.c.\ system is completely order isomorphic to a (nuclear) $\Cstar$-algebra. We begin with a corollary to Ozawa and Sato's One-Way-CPAP, which appeared implicitly in \cite{Ozawa2002} (via \cite{KS03}); see \cite[Theorem 5.1]{Sato2019} for the explicit statement and its  proof. We recall it here for the reader's convenience. 

\begin{theorem}[\cite{Sato2019, Ozawa2002}]\label{thm: Sato}
A $\Cstar$-algebra $A$ is nuclear if and only if there exists a net $\{\varphi_\lambda\colon F_\lambda\longrightarrow A\}_{\lambda\in \Lambda}$ of c.p.c.\ maps from finite-dimensional $\Cstar$-algebras $\{F_\lambda\}_{\lambda \in \Lambda}$ such that the induced c.p.c.\ map 
\begin{align*}
    \Phi=(\varphi_\lambda)_\lambda\colon \textstyle{\prod} F_\lambda/\textstyle{\bigoplus} F_\lambda &\longrightarrow \ell^\infty (\Lambda,A)/c_0(\Lambda, A),
    \end{align*}
    given by
    $\Phi([(x_\lambda)_{\lambda\in \Lambda}])=[(\varphi_\lambda(x_\lambda))_{\lambda\in \Lambda}],
$ 
satisfies
\[\iota(A^1)\subset \Phi\big((\textstyle{\prod} F_\lambda/ \textstyle{\bigoplus} F_\lambda)^1\big),\]
where $\iota\colon A \longrightarrow \ell^\infty (\Lambda,A)/c_0(\Lambda, A)$ denotes the identification of $A$ with the sub-$\Cstar$-algebra of $\ell^\infty (\Lambda,A)/c_0(\Lambda, A)$ consisting of equivalence classes of constant nets.
\end{theorem}

Using this One-Way-CPAP, we were able to show in \cite[Theorem 2.13]{CW1} that the $\Cstar$-limit of any $\CPCstar$-system is nuclear. In fact, the exact same proof shows a stronger generalization. We give the statement and proof here again to make that clear.

\begin{corollary}\label{thm: nuclear}
If a $\Cstar$-algebra $A$ is completely order isomorphic to the limit $\Lim$ of a finite-dimensional c.p.c.\ system $(F_n,\rho_{n+1,n})_n$, then $A$ is nuclear. 
\end{corollary}

\begin{proof}
Let $(F_n,\rho_{n+1,n})_n$ be a finite-dimensional c.p.c.\ system and $\Psi:A\to F_\infty$ a complete order embedding with $\Psi(A)=\Lim$.  For each $m\geq 0$, we define a c.p.c.\ map $\varphi_m\coloneqq \Psi^{-1}\circ\rho_m\colon F_m\longrightarrow A$.  We denote  the sequence algebra $\prod_m A/ \bigoplus_m A$ by $A_\infty$, and we write $\iota\colon A\longrightarrow A_\infty$ 
for the 
embedding as equivalence classes of constant sequences. 
Let $\Phi\colon F_\infty\longrightarrow A_\infty$ be the c.p.c.\ map induced by the $\varphi_m$ as in \cref{thm: Sato} with 
$\Phi([(x_m)_m])=[(\varphi_m(x_m))_m]$. 
Note that 
for $k\geq 0$ and $x\in F_k$, we have 
\begin{align*}
\Phi(\rho_k(x))&=\Phi([(\rho_{m,k}(x))_{m>k}])\\&=[(\varphi_m(\rho_{m,k}(x)))_{m>k}]\\
&=[((\Psi^{-1}\circ\rho_m)(\rho_{m,k}(x))_{m>k}]\\
&=[(\Psi^{-1}\circ\rho_k(x))_{m>k}]\\
&=\iota\circ\Psi^{-1}\circ\rho_k(x) 
\end{align*}
Since these elements are dense in $\Lim$, it follows that $\Phi|_{\Lim}=\iota\circ\Psi^{-1}$.  
Since $\Psi^{-1}$ is isometric, that gives us  \begin{align*}\iota(A^1) =\iota\circ\Psi^{-1}\big(\Lim ^1\big)= \Phi\big(\Lim ^1\big)\subset \Phi(F_\infty^1).\end{align*} 
Now with \cite[Theorem 5.1]{Sato2019} (as stated above in \cref{thm: Sato}), we conclude that $A$ is nuclear. 
\end{proof}

Combining this with \cref{prop: manifesting are coi'}, we have the following corollary.

\begin{corollary}\label{thm: sato}
If $(F_n,\rho_{n+1,n})_n$ is a finite-dimensional $\Cstar$-encoding 
system, then its $\Cstar$-limit 
is a nuclear $\Cstar$-algebra.
\end{corollary}

Now we proceed towards showing that $\Cstar$-encoding is necessary for the limit of a finite-dimensional c.p.c.\ system to be completely order isomorphic to a (nuclear) $\Cstar$-algebra. The following is essentially \cite[Proposition 5.1.4]{BK97}. 
\begin{proposition}\label{system to approx}
   Let $(F_n,\rho_{n+1,n})_n$ be a finite-dimensional c.p.c.\ system, $A$ a nuclear $\Cstar$-algebra, and $\Psi\colon A\to F_\infty$ a complete order embedding with $\Psi(A)=\Lim$. Define $\varphi_n\coloneqq \Psi^{-1}\circ \rho_n:F_n\to A$. Then  there exist c.p.c.\ maps $\psi_n: A \to F_n$ so that $(A \xrightarrow{\psi_n} F_n \xrightarrow{\varphi_n} A)_n$ forms a system of c.p.c.\ approximations of $A$. If $\Psi$ is order zero, resp.\ a $^*$-homomorphism, then the maps $\psi_n$ are asymptotically order zero, resp.\ multiplicative. 
\end{proposition}

\begin{proof}
    Since $A$ is nuclear by \cref{thm: sato} and separable, the Choi-Effros lifting theorem guarantees a lift $\oplus_m \psi_m\colon A\to \prod F_m$ of the c.p.c.\ map $\Psi\colon A \to F_\infty$. We claim that these along with the $(\varphi_n)_n$ form a system of c.p.c.\ approximations for $A$.  Since $\Psi^{-1}(\bigcup_n \rho_n(F_n))=\bigcup_n \varphi_n(F_n)$ is dense in $A$, it suffices to check the approximations on $\varphi_k(x)$ for some fixed $k\geq 0$ and $x\in F_k$. Let $\varepsilon>0$. Since
    \begin{align*} [(\psi_n(\varphi_k(x)))_n]=\Psi(\varphi_k(x))=\rho_k(x)=[(\rho_{n,k}(x))_{n>k}]
    \end{align*}
   for all $n>k$, there exists $N>k$ such that for all $n>N$,
    \begin{align*}
        \|\rho_{n,k}(x)-\psi_n(\varphi_k(x))\|<\varepsilon. 
    \end{align*} 
    Then since $\Psi$ is isometric, we have for all $n>N$ 
    \begin{align*}
        \|\varphi_k(x)-\varphi_n(\psi_n(\varphi_k(x)))\|&=\|(\Psi^{-1}\circ\rho_k)(x)-(\Psi^{-1}\circ\rho_n)(\psi_n(\varphi_k(x)))\|\\
        &=\|\rho_k(x)-\rho_n(\psi_n(\varphi_k(x)))\|\\
        &< \|\rho_k(x)-\rho_n(\rho_{n,k}(x)))\|+ \varepsilon\\
        &=\varepsilon. 
    \end{align*}
    The final claims are immediate. 
\end{proof}

\begin{remark}
    It follows that only a quasi-diagonal $\Cstar$-algebra can be $^*$-isomorphic to the limit of a finite-dimensional c.p.c.\ system. That means that for $\Cstar$-algebras that are not quasi-diagonal, a complete order isomorphism is the best we can do. 
\end{remark}

\begin{corollary}\label{encoding to approx}
   Let $(F_n,\rho_{n+1,n})_n$ be a finite-dimensional $\Cstar$-encoding system, and define $\varphi_n\coloneqq \Theta^{-1}\circ \rho_n:F_n\to \Cstar_{\sbt}(\Lim)$. Then there exist c.p.c.\ maps $\psi_n: A \to F_n$ so that $(\Cstar_{\sbt}(\Lim) \xrightarrow{\psi_n} F_n \xrightarrow{\varphi_n} \Cstar_{\sbt}(\Lim))_n$ is a system of c.p.c.\ approximations of $\Cstar_{\sbt}(\Lim)$. 
\end{corollary}

\begin{theorem}\label{thm: big theorem}
    Let $(F_n,\rho_{n+1,n})_n$ be a finite-dimensional c.p.c.\ system. Then the following are equivalent. 
    \begin{enumerate}[label=\textnormal{(\roman*)}]
        \item The limit $\Lim$ is completely order isomorphic to a $\Cstar$-algebra.
        \item The limit $\Lim$ is completely order isomorphic to a nuclear $\Cstar$-algebra.
        \item The system has a $\Cstar$-encoding subsystem. 
    \end{enumerate}
\end{theorem}

\begin{proof}
    That (i) $\Leftrightarrow$ (ii) follows from \cref{thm: nuclear}. If the system has a $\Cstar$-encoding subsystem, then we know from \cref{thm: sato} and \cref{prop: manifesting are coi'} that the limit of this subsystem is completely order isomorphic to a nuclear $\Cstar$-algebra. Hence by \cref{subsystem}, the limit of the original system is as well. Thus we already have (iii) $\Rightarrow$ (ii) $\Leftrightarrow$ (i), and it remains to prove (ii) $\Rightarrow$ (iii). 

    Let $A$ be a nuclear $\Cstar$-algebra and $\Psi\colon A\to \Lim$ a complete order isomorphism, which we regard as a complete order embedding $\Psi\colon A\to F_\infty$. For each $m\geq 0$, define $\varphi_m\coloneqq \Psi^{-1}\circ\rho_m\colon F_m\to A$. 
    Just as in the proof of \cref{system to approx}, we have 
    a lift $\oplus_m \psi_m:A\to \prod_m F_m$ of $\Psi\colon A\to F_\infty$ 
    such that
    for any $k\geq 0$, $x\in F_k$, and $\varepsilon>0$, there exists $N>k$ such that for all $n>N$,
    \begin{align*}
        \|\rho_{n,k}(x)-\psi_n(\varphi_k(x))\|<\varepsilon. 
    \end{align*}
Using the compactness of the unit ball of each $(F_k)^1$, for any $k\geq 0$ and  $\varepsilon>0$, we can find $N>k$ so that for all $n>N$,
\begin{align*}
        \|\rho_{n,k}-\psi_n\circ\varphi_k\|<\varepsilon. 
    \end{align*}

Let $(\varepsilon_j)_j\in c_0(\mathbb{N})^1_+$ be a decreasing sequence. Choose $n_0=0$ and $n_1>n_0$ so that $\|\rho_{n,n_0}-\psi_n\circ\varphi_{n_0}\|<\varepsilon_0$ for all $n\geq n_1$. Continue this way to form a subsystem $(F_{n_j},\rho_{n_{j+1},n_j})_j$ such that for any $k\geq 0$, we have for all $j>k$
\begin{align*}
    \|\rho_{n_j,n_k}-\psi_{n_j}\circ\varphi_{n_k}\|<\varepsilon_k.
\end{align*}
We claim that this subsystem $(F_{n_j},\rho_{n_{j+1},n_j})_j$ is $\Cstar$-encoding.

With the same notation as in \cref{subsystem}, we form the limit $\Limj$ of the subsystem via the maps $\hat{\rho}_{n_j}:F_{n_j}\to \prod F_{n_j}/\bigoplus F_{n_j}$.
By \cref{subsystem}, we have a complete order isomorphism $\theta\coloneqq \hat{\pi}|_{\Lim} \colon \Lim \to \Limj$ where $\hat{\pi}\colon \prod F_{m}/\bigoplus F_{m}\to  \prod F_{n_j}/\bigoplus F_{n_j}$ is induced by the surjection $\pi:\prod_m F_m \to \prod_j F_{n_j}$. Then $\theta\circ\Psi\colon A\to \Limj$ is a complete order isomorphism with lift $\oplus_j \psi_{n_j}\colon A\to \prod_j F_{n_j}$. 

After this point, we will consider only the subsystem $(F_{n_j},\rho_{n_{j+1},n_j})_j$, and so we drop the subscripts for ease of notation. With that we have that for any $\varepsilon>0$ there exists $M>0$ so that for all $m>n>M$
\begin{align}\label{*}
    \|\rho_{m,n}-\psi_m\circ\varphi_n\|<\varepsilon.
\end{align} 
(Note that now $m$ does not depend on $n$.) Just as in the proof of \cref{system to approx}, we still have that for any $k\geq 0$, $x\in F_k$, and $\varepsilon>0$, there exists $N>k$ such that for all $n>N$,
    \begin{align}\label{ptwise2}
        \|\rho_{n,k}(x)-\psi_n(\varphi_k(x))\|<\varepsilon, 
    \end{align}
and $(A\xrightarrow{\psi_n}F_n\xrightarrow{\varphi_n}A)_n$ is a system of c.p.c.\ approximations of $A$. 

Now to show that $(F_n,\rho_{n+1,n})_n$ is $\Cstar$-encoding, set $k\geq 0$, $x,y\in F_k^1$, and $0<\varepsilon<1$. By possibly writing $x$ as a linear combination of self-adjoint elements and then distributing, we may without loss of generality reduce to the case where $x$ is self-adjoint. Set $\eta\coloneqq \sfrac{\varepsilon}{6}$. Using our system of c.p.c.\ approximations, \eqref{*}, and \eqref{ptwise2}, we can choose $M>k$ so that for all $m>n> M$, $i\in \{1,2\}$, and $w\in \{x,y\}$ we have 
\begin{align}
    \|\varphi_n(\psi_n\big(\varphi_k(x)\varphi_k(y)\big))-\varphi_k(x)\varphi_k(y)\|&<\sfrac{\eta^2}{3}<\eta \label{1}\\
    \|\varphi_n(\psi_n\big(\varphi_k(x)^i\big))-\varphi_k(x)^i\|&<\sfrac{\eta^2}{3}<\eta\label{2}\\
    \|\rho_{m,n}-\psi_m\circ\varphi_n\|&<\eta, \text{ and} \label{3}\\
    \|\rho_{n,k}(w)-\psi_n(\varphi_k(w))\|&<\eta \label{4}.
\end{align}
With \eqref{2}, we can invoke \cref{lem: Stinespring} to conclude that for any $n>M$ and $v\in F_n^1$ we have 
\begin{align}\label{5}
    &\phantom{\overset{\eqref{2}}{<}}\|\varphi_n\big(\psi_n(\varphi_k(x))v\big)-\varphi_k(x)\varphi_n(v)\|\\
    &\overset{\phantom{\eqref{2}}}{<} \|\varphi_n(\psi_n(\varphi_k(x)))\varphi_n(v)-\varphi_k(x)\varphi_n(v)\| + \eta \nonumber \\
    &\overset{\eqref{2}}{<}2\eta. \nonumber
    \end{align}
Then for $m>n>M$ we have 
\begin{align*}
   &\phantom{\overset{\eqref{4}}{<} } \|\rho_{m,n}\big(\rho_{n,k}(x)\rho_{n,k}(y)\big)-\psi_m\big(\varphi_k(x)\varphi_k(y)\big)\|\\
   &\overset{\eqref{4}}{<} \|\rho_{m,n}\big(\psi_n(\varphi_k(x))\psi_n(\varphi_k(y))\big)-\psi_m\big(\varphi_k(x)\varphi_k(y)\big)\| + 2\eta\\
   &\overset{\eqref{3}}{<} \|(\psi_m\circ\varphi_n)\big(\psi_n(\varphi_k(x))\psi_n(\varphi_k(y))\big)-\psi_m\big(\varphi_k(x)\varphi_k(y)\big)\| + 3\eta\\
   &\overset{\phantom{\eqref{3}}}{\leq} \|\varphi_n\big(\psi_n(\varphi_k(x))\psi_n(\varphi_k(y))\big)-\varphi_k(x)\varphi_k(y)\| + 3\eta\\
    &\overset{\eqref{5}}{<} \|\varphi_k(x)\varphi_n(\psi_n(\varphi_k(y)))-\varphi_k(x)\varphi_k(y)\| + 3\eta +2\eta\\
    &\overset{\eqref{2}}{<} 6\eta\\
    &\overset{\phantom{\eqref{3}}}{=}\varepsilon.
\end{align*}
With a triangle inequality, this shows that the system satisfies \cref{def: encoding}, which shows that (ii) $\Rightarrow$ (iii).
\end{proof}

\begin{remark}\label{big thm for NF}
    Using a similar argument, one can show that the following are equivalent for a c.p.c.\ system $(F_n,\rho_{n+1,n})_n$:
    \begin{enumerate}[label=\textnormal{(\roman*)}]
        \item The limit $\Lim$ is a $\Cstar$-algebra.
        \item The limit $\Lim$ is a nuclear $\Cstar$-algebra.
        \item The system has an NF subsystem. 
    \end{enumerate}
\end{remark}

\begin{remark}
It follows that \emph{only} in the case of NF systems does the product from \eqref{prod'} agree with the product on $\prod_m F_m / \bigoplus_m F_m$ (\cref{big thm for NF}) (meaning the limit is a sub-$\Cstar$-algebra). 
\end{remark}

\begin{corollary}\label{cor: cpcstar is encoding}
  Every $\CPCstar$-system has a $\Cstar$-encoding subsystem. 
\end{corollary}

\begin{proof}
      We know from \cite[Proposition 2.7 and Theorem 2.13]{CW1} that the limit of every $\CPCstar$-system is completely order isomorphic to a (nuclear) $\Cstar$-algebra. Hence \cref{thm: big theorem} tells us that every $\CPCstar$-system has a $\Cstar$-encoding subsystem.
\end{proof}

\begin{remark}
By \cref{cor: cpcstar is encoding} we can say that $\Cstar$-encoding systems generalize $\CPCstar$-systems (in the same sense that $\CPCstar$-systems generalize NF systems in \cite[Theorem 4.4]{CW1}).
\section{A c.p.c.\ system from a system of c.p.c.\ approximations}\label{sect: from cpap}
\end{remark}

Now we arrive at our main class of examples of $\Cstar$-encoding systems. In this section we show that  
\emph{any} summable system of c.p.c.\ approximations of a separable nuclear $\Cstar$-algebra $A$ induces a $\Cstar$-encoding system 
whose $\Cstar$-limit is canonically isomorphic to $A$. We begin by recalling the notion of summable systems of c.p.c.\ approximations from \cite{CW1}. 

\begin{definition}\label{def: summable}\label{def: ass sys}
Let $A$ be a separable $\Cstar$-algebra and  
 $(A\xrightarrow{\psi_n}F_n\xrightarrow{\varphi_n}A)_n$ a system of c.p.c.\ approximations of $A$.  
Set $\rho_{n+1,n}\coloneqq\psi_{n+1}\circ\varphi_n$ for each $n\geq 0$. We call the c.p.c.\ system $(F_n,\rho_{n+1,n})_n$ the \emph{associated c.p.c.\ system} for $(A\xrightarrow{\psi_n}F_n\xrightarrow{\varphi_n}A)_n$. For $m\geq n\geq 0$, we define $\rho_{m,n}$ as in \cref{def: cpc system}.\\
We say the system of c.p.c.\ approximations \emph{summable} if there exists a decreasing sequence $(\varepsilon_m)_m\in \ell^1(\mathbb{N})_+^1 $ such that for all $m>n\geq 0$,
\begin{align*}
  \|\varphi_n-\varphi_m \circ \psi_m\circ\varphi_n\|<\varepsilon_m. \end{align*}
\end{definition}

\begin{remarks}\label{rmk: summable}
(i) 
In essence, summability guarantees that the maps $\rho_{m,n}\coloneqq\psi_m\circ\varphi_{m-1}\circ\hdots \circ\varphi_n\colon F_n\to F_m$ become uniformly close to the maps $\psi_m\circ\varphi_n\colon F_n\to F_m$ for $m>n$ sufficiently large. 
Indeed, since the unit ball of each $F_n$ is compact, 
we may choose for any $\varepsilon>0$ an $M>0$ so that for all $m>n>M$, 
\begin{align}
    \|\rho_{m,n}-\psi_m\circ\varphi_n\|&\leq\|\varphi_{m-1}\circ \rho_{m-1,n}-\varphi_n\|<\textstyle{\sum}_{j=n+1}^{m-1}\varepsilon_j<\varepsilon. \label{eq: zigzag}
\end{align}
A key upshot is that for any $k\geq 0$ and $x\in F_k$, the sequence $(\varphi_n(\rho_{n,k}(x)))_{n>k}$ is Cauchy and hence converges in $A$. We denote the limit as 
\begin{align}\label{ax}
    a_x\coloneqq\lim_n \varphi_n(\rho_{n,k}(x)).
\end{align}
Another upshot is that $\{a_x\mid x\in F_k, k\geq 0\}$ is dense in $A$. Indeed, using \eqref{eq: zigzag} and our completely positive approximations, for any given $a\in A$ and $\varepsilon>0$, we can find $M>0$ such that for all $m>n>M$, 
\begin{align*} 
\|\varphi_m \big( \rho_{m,n} \big( \psi_n(a)\big)\big)-a\|&\leq  \|\big(\varphi_m \circ \rho_{m,n} -\varphi_n\big) (\psi_n(a))\|+\|\varphi_n(\psi_n(a))-a\|\\
&< \sfrac{\varepsilon}{2} +\sfrac{\varepsilon}{2} \\&<\varepsilon. 
    \end{align*}
and so $a=\lim_n a_{\psi_n(a)}$.

(ii) Given any system of c.p.c.\ approximations $(A\xrightarrow{\psi_n}F_n\xrightarrow{\varphi_n}A)_n$ of a separable nuclear $\Cstar$-algebra $A$ and a decreasing sequence $(\varepsilon_m)_m\in \ell^1(\mathbb{N})_+^1$, using the compactness of each $F_n^1$, one can always find a $(\varepsilon_m)_m$-summable subsystem of $(A\xrightarrow{\psi_n}F_n\xrightarrow{\varphi_n}A)_n$ (as noted in \cite[Remark 3.2(ii)]{CW1}). 
\end{remarks}

The aim of this section is to show that when a system of c.p.c.\ approximations is summable, the associated c.p.c.\ system is $\Cstar$-encoding. Its $\Cstar$-limit is $^*$-isomorphic to $A$ with the $^*$-isomorphism given by composing the map $\Psi\colon A\to F_\infty$ induced by the sequence $(\psi_n)_n$ with the map $\Theta^{-1}$ from \cref{prop: manifesting are coi'}. 
First, we show  that $(F_n,\rho_{n+1,n})_n$ is $\Cstar$-encoding. 
This will follow from the following estimates, which we state here for future use. The inequality in \eqref{eq: mult eq'} follows from the definition of $a_x$ and $a_y$, but we label it here for easy reference later.

\begin{lemma}\label{lem: rho and psi prod}
Let $(A\xrightarrow{\psi_n}F_n\xrightarrow{\varphi_n}A)_n$ be a summable system of c.p.c.\ approximations of a $\Cstar$-algebra $A$ with associated c.p.c.\ system $(F_n,\rho_{n+1,n})_n$. Then for any $k\geq 0$, $x,y\in F_k$, and $\varepsilon>0$, there exists $M>k$ so that for all $m>n>M$, 
\begin{align}
    \|\rho_{m,n}\big(\rho_{n,k}(x)\rho_{n,k}(y)\big) - \psi_m(a_xa_y)\|&<\varepsilon,\ \text{ and }\label{eq: mult eq}\\ 
    \|\rho_{m,k}(x)\rho_{m,k}(y)-\psi_m(a_x)\psi_m(a_y)\|&<\varepsilon, \label{eq: mult eq'} 
\end{align}
where 
$a_x,a_y\in A$ are as defined in 
\eqref{ax}.
\end{lemma}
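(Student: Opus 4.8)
The plan is to prove both estimates by relating the iterated connecting maps $\rho_{m,n}$ to the ``one-step'' maps $\psi_m\circ\varphi_n$, using summability, and then using \cref{lem: Stinespring} to turn $\varphi_n$ into a near-multiplicative map. First I would fix $k\geq 0$, $x,y\in F_k$, and $\varepsilon>0$, and reduce to the case $x,y\in (F_k)^1$. Recall from \eqref{ax} that $a_x=\lim_n\varphi_n(\rho_{n,k}(x))$ and $a_y=\lim_n\varphi_n(\rho_{n,k}(y))$, so by continuity of $\psi_m$ (which is c.p.c.\ hence contractive), for all large $m$ we have $\|\psi_m(a_x)-\psi_m(\varphi_n(\rho_{n,k}(x)))\|$ small and similarly for $y$, uniformly once $n$ is large; this is the main tool for comparing $\psi_m(a_x)$ with $\psi_m\circ\varphi_n\circ\rho_{n,k}(x)=\rho_{m,n}(\rho_{n,k}(x))$.

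For \eqref{eq: mult eq}, the key is to write $\rho_{m,n}(\rho_{n,k}(x)\rho_{n,k}(y)) = \psi_m\big(\varphi_{m-1}(\rho_{m-1,n}(\rho_{n,k}(x)\rho_{n,k}(y)))\big)$ and to approximate this via the zigzag estimate \eqref{eq: zigzag}: for $m>n$ large, $\rho_{m,n}$ is uniformly close to $\psi_m\circ\varphi_n$, so $\rho_{m,n}(\rho_{n,k}(x)\rho_{n,k}(y))$ is close to $\psi_m\big(\varphi_n(\rho_{n,k}(x)\rho_{n,k}(y))\big)$. Now I would apply \cref{lem: Stinespring} with the c.p.c.\ pair $A\xrightarrow{\psi_n}F_n\xrightarrow{\varphi_n}A$: since $\varphi_n\circ\psi_n\to\id_A$ pointwise and $a_x\in A$ is (after the usual self-adjoint decomposition $a_x = \tfrac12(a_x+a_x^*)+\tfrac{i}{2}\cdot\tfrac1i(a_x-a_x^*)$, treating self-adjoint parts separately as in the lemma) controlled, we get $\|\varphi_n(\psi_n(a_x)b)-\varphi_n(\psi_n(a_x))\varphi_n(b)\|$ small for all $b$ in a bounded set. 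Taking $b=\psi_n(a_y)$ and using that $\psi_n(a_x)$ is close to $\rho_{n,k}(x)$ and $\psi_n(a_y)$ is close to $\rho_{n,k}(y)$ (again by \eqref{ax}), we find $\varphi_n(\rho_{n,k}(x)\rho_{n,k}(y))$ is close to $\varphi_n(\psi_n(a_x))\varphi_n(\psi_n(a_y))$, which in turn is close to $a_x a_y$; applying the contraction $\psi_m$ and collecting terms gives \eqref{eq: mult eq}. The estimate \eqref{eq: mult eq'} is simpler: $\rho_{m,k}(x)=\rho_{m,m-1}(\rho_{m-1,k}(x))$, and again by \eqref{eq: zigzag} and \eqref{ax}, $\rho_{m,k}(x)=\psi_m(\varphi_{m-1}(\rho_{m-1,k}(x)))$ is close to $\psi_m(a_x)$, similarly for $y$; since multiplication is jointly continuous on bounded sets and all elements involved have norm $\leq 1$, the product $\rho_{m,k}(x)\rho_{m,k}(y)$ is close to $\psi_m(a_x)\psi_m(a_y)$.

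The main obstacle will be bookkeeping the $\varepsilon$'s: one must choose $M$ large enough that (a) the zigzag bound $\sum_{j>n}\varepsilon_j$ is small, (b) $\varphi_n(\rho_{n,k}(x))$ is within the target tolerance of $a_x$ (and likewise for $a_y$, and for the second powers needed by \cref{lem: Stinespring}), and (c) the hypothesis $\|\varphi_n\circ\psi_n(a_x^i)-a_x^i\|<\eta^2/3$ of \cref{lem: Stinespring} holds for $i\in\{1,2\}$ with $\eta$ small enough that $\eta\|b\|<\varepsilon/(\text{const})$. None of these is deep, but they must be interleaved carefully, and one should note that the norms of $a_x,a_y$ are at most $1$ (as pointwise limits of contractions applied to contractions), so the $\|b\|$ factor in \cref{lem: Stinespring} is harmless. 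A minor subtlety is that \cref{lem: Stinespring} is stated for self-adjoint $a$, so one passes through real and imaginary parts of $a_x$, at the cost of a factor of $2$ in the estimate, which is absorbed by shrinking $\eta$.
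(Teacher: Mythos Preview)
Your proposal is correct and follows essentially the same route as the paper's proof: both use the zigzag estimate \eqref{eq: zigzag} to replace $\rho_{m,n}$ by $\psi_m\circ\varphi_n$, the definition \eqref{ax} to replace $\rho_{n,k}(x)$ by $\psi_n(a_x)$, and \cref{lem: Stinespring} (applied to the pair $\psi_n,\varphi_n$) to pass from $\varphi_n(\psi_n(a_x)\psi_n(a_y))$ to $a_xa_y$. The only cosmetic difference is that the paper reduces to $x$ self-adjoint at the outset (so $a_x$ is automatically self-adjoint), whereas you decompose $a_x$ into real and imaginary parts when invoking \cref{lem: Stinespring}; both handle the self-adjointness hypothesis equally well. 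Note also that for \eqref{eq: mult eq'} you do not actually need \eqref{eq: zigzag}: the identity $\rho_{m,k}(x)=\psi_m(\varphi_{m-1}(\rho_{m-1,k}(x)))$ together with \eqref{ax} already suffices, as the paper's proof makes explicit.
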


\begin{proof}
Set $k\geq 0$, $x,y\in F_k^1$, and $0<\varepsilon<1$.  Moreover, by possibly writing $x$ as a linear combination of self-adjoint elements and distributing, we may reduce the argument to the case where $x$, and hence $a_x$, is self-adjoint. 
Using \eqref{eq: zigzag} and the c.p.c.\ approximations, we may choose $M_0>0$ so that for all $m>n>M_0$,
\begin{align*}
    \|\rho_{m,n}-\psi_m\circ\varphi_n\|&<\sfrac{\varepsilon}{4},\\
    \|\varphi_n(\psi_n(a_x))\varphi_n(\psi_n(a_y))-a_xa_y\|&<\sfrac{\varepsilon}{4},\ \text{ and }\\
    \|\varphi_n(\psi_n(a_x^i))-a_x^i\|&<(\sfrac{\varepsilon}{4})^2/3<\sfrac{\varepsilon}{8},\ \text{ for } i=1,2.
\end{align*}
By definition of $a_x$ and $a_y$, we can find $M>M_0$ so that for all $m>M_0$,
\begin{align*}
    &\|\rho_{m,k}(x)\rho_{m,k}(y)-\psi_m(a_x)\psi_m(a_y)\|\\
    &=\|\psi_m(\varphi_{m-1}(\rho_{m-1,k}(x)))\psi_m(\varphi_{m-1}(\rho_{m-1,k}(y)))-\psi_m(a_x)\psi_m(a_y)\|\\
    &<\sfrac{\varepsilon}{4}.
\end{align*}
This establishes \eqref{eq: mult eq'}. Using \cref{lem: Stinespring} and the preceding estimates we have for all $m>n>M$ 
\begin{align*}
    \|\rho_{m,n}(\rho_{n,k}(x)\rho_{n,k}(y)) - \psi_m(a_xa_y)\|
    &< \|\rho_{m,n}(\psi_n(a_x)\psi_n(a_y))-\psi_m(a_xa_y)\| +\sfrac{\varepsilon}{4} \\
    &<  \|\psi_m\circ \varphi_n(\psi_n(a_x)\psi_n(a_y))-\psi_m(a_xa_y)\| + \sfrac{\varepsilon}{2}\\
    &\leq \|\varphi_n(\psi_n(a_x)\psi_n(a_y))-a_xa_y\| + \sfrac{\varepsilon}{2}\\
    &< \|\varphi_n(\psi_n(a_x))\varphi_n(\psi_n(a_y))-a_xa_y\| +\sfrac{3\varepsilon}{4} \\
    &<\varepsilon. \qedhere
\end{align*}
\end{proof}
Using a triangle inequality on \eqref{eq: mult eq}, it follows that any summable system of c.p.c.\ approximations of a nuclear $\Cstar$-algebra is $\Cstar$-encoding  
as in \cref{def: encoding}. 
\begin{corollary}\label{prop: times + * are necessary}
Let $(A\xrightarrow{\psi_n}F_n\xrightarrow{\varphi_n}A)_n$ be a summable system of c.p.c.\ approximations of a $\Cstar$-algebra $A$. Then the associated c.p.c.\ system $(F_n,\rho_{n+1,n})_n$ is $\Cstar$-encoding.
\end{corollary}

To define the isomorphism between $A$ and $\Cstar_{\sbt}\big(\Lim\big)$, we start with the map $\Psi\colon A\to F_\infty$ induced by the maps $\psi_n\colon A\to F_n$, i.e., 
  \begin{align}
 \Psi(a)\coloneqq[(\psi_n(a))_n], \ \text{ for all } a\in A. \label{eq: Psi}
  \end{align}

Since the system is summable, we know from \cite[Lemma 3.4]{CW1} that $\Psi$ is a complete order embedding, and its image is exactly the limit $\Lim\subset F_\infty$.  Now with \cref{cor: limit is Cstar} and Remark \ref{rmk: surj coi}(ii), we conclude that $\Theta^{-1}\circ\Psi\colon A\to \Cstar_{\sbt}(\Lim)$ is a $^*$-isomorphism where $\Theta$ is the identity map in \cref{prop: manifesting are coi'}. Combined with \cref{prop: times + * are necessary}, this gives the main theorem of this section.

\begin{theorem}\label{thm: main}
Let $(A\xrightarrow{\psi_n}F_n\xrightarrow{\varphi_n}A)_n$ be a summable system of c.p.c.\ approximations of a $\Cstar$-algebra $A$. 
Then the induced c.p.c.\ system $(F_n,\rho_{n+1,n})_n$ is $\Cstar$-encoding, 
and moreover the map $\Theta^{-1}\circ\Psi\colon A\to \Cstar_{\sbt}(\Lim)$ is a $^*$-isomorphism between $A$ and the $\Cstar$-limit $\Cstar_{\sbt}(\Lim)$ of the system, where $\Psi\colon A\to F_\infty$ is the map from \eqref{eq: Psi} and $\Theta$ is the identity map in \cref{prop: manifesting are coi'}. 
\end{theorem}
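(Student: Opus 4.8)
The plan is to assemble results already established; \cref{thm: main} is a bookkeeping combination of \cref{prop: times + * are necessary}, the machinery of \cref{sect: abstract}, and the cited \cite[Lemma 3.4]{CW1}, with no genuinely new content. The first assertion --- that the associated c.p.c.\ system $(F_n,\rho_{n+1,n})_n$ is $\Cstar$-encoding --- is exactly \cref{prop: times + * are necessary}. (The substance there is the verification of \cref{def: encoding}: using summability one replaces the iterated maps $\rho_{m,n}$ by the ``zigzag'' $\psi_m\circ\varphi_n$ for $m>n$ large, rewrites the relevant products as $\psi_m(a_xa_y)$ via \cref{lem: rho and psi prod}, and then applies the Stinespring-type inequality \cref{lem: Stinespring} after decomposing into self-adjoint pieces to obtain \eqref{eqn: asym stinespring} and \eqref{eqn: asym associative}; condition \cref{def: encoding}(ii) follows from the fact that $(\varphi_n(\rho_{n,k}(x)))_n$ is norm-Cauchy, which forces $\|\rho_{n,k}(x)\|$ to be eventually nearly dominated by $\|\rho_{m,k}(x)\|$.)

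Granting this, \cref{prop: encoding systems are cstar} furnishes the associative product $\sbt$ of \eqref{eq: desired prod}, making $\Cstar_{\sbt}(\Lim)$ a $\Cstar$-algebra --- the $\Cstar$-limit of the system --- with norm and involution inherited from $F_\infty$, and \cref{prop: manifesting are coi'} shows that $\Theta=\id_{\Lim}\colon \Cstar_{\sbt}(\Lim)\to F_\infty$ is a complete order embedding whose image is $\Lim$. On the other side, summability of the system of c.p.c.\ approximations lets us invoke \cite[Lemma 3.4]{CW1}: the map $\Psi\colon A\to F_\infty$ of \eqref{eq: Psi} is a complete order embedding with image exactly $\Lim$ as well.

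Thus $\Theta$ and $\Psi$ are complete order embeddings of the $\Cstar$-algebras $\Cstar_{\sbt}(\Lim)$ and $A$ into $F_\infty$ with $\Theta(\Cstar_{\sbt}(\Lim))=\Lim=\Psi(A)$, so \cref{cor: limit is Cstar} (equivalently, the argument behind Remark \ref{rmk: surj coi}(ii)) shows that $\Theta^{-1}\circ\Psi\colon A\to \Cstar_{\sbt}(\Lim)$ is a surjective complete order embedding between $\Cstar$-algebras, hence a $^*$-isomorphism. I would also record that this isomorphism is the canonical one: unwinding definitions, for $k\geq 0$ and $x\in F_k$ it carries $a_x=\lim_n\varphi_n(\rho_{n,k}(x))\in A$ to $\rho_k(x)\in\Cstar_{\sbt}(\Lim)$, matching the two descriptions of the limit.

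All the genuine difficulty lies in \cref{prop: times + * are necessary} --- above all the asymptotic associativity estimate \eqref{eqn: asym associative} --- and in the cited \cite[Lemma 3.4]{CW1}; once these are in place, this final step is purely formal, relying only on the rigidity of complete order embeddings between $\Cstar$-algebras, so there is no remaining obstacle at this stage.
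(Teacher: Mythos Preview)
Your proposal is correct and follows essentially the same approach as the paper: invoke \cref{prop: times + * are necessary} for the $\Cstar$-encoding claim, then combine \cite[Lemma 3.4]{CW1} (giving $\Psi$ as a complete order embedding onto $\Lim$) with \cref{prop: manifesting are coi'} and \cref{cor: limit is Cstar}/Remark~\ref{rmk: surj coi}(ii) to conclude that $\Theta^{-1}\circ\Psi$ is a $^*$-isomorphism. The paper's argument is exactly this assembly, stated more tersely in the paragraph preceding the theorem.
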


Combining this with  \cref{subsystem}, we have the following. 

\begin{corollary}
    Let $(A\xrightarrow{\psi_n}F_n\xrightarrow{\varphi_n}A)_n$ be any system c.p.c.\ approximations of a nuclear $\Cstar$-algebra $A$, and let $(F_n,\psi_{n+1}\circ\varphi_n)_n$ be the associated subsystem.  Then $A$ is completely order isomorphic to the limit of the c.p.c.\ system.
\end{corollary}



\section{$\Cstar$-encoding systems for amenable group $\Cstar$-algebras}\label{sect: Folner Z}

One advantage of our construction is how readily we can construct examples of $\Cstar$-encoding systems. In this section we show how to build a $\Cstar$-encoding system for the reduced group $\Cstar$-algebra for any countable discrete amenable group using classic c.p.c.\ approximations via F{\o}lner sequences (as in \cite[Theorem 2.6.8]{BO}). 
We shall also see that, at least for any non-torsion group, the resulting c.p.c.\ system is never NF or $\CPCstar$. 

Let $G$ be a countable amenable discrete group with reduced group $\Cstar$-algebra $\Cstar_r(G)\subset B(\ell^2(G))$ induced by the left regular representation $\lambda:G\to B(\ell^2(G))$ with $\lambda(g)=\lambda_g\in \mathcal{U}(\ell^2(G))$ given by $\lambda_g\delta_h=\delta_{gh}$ for all $g,h\in G$. Recall that a \emph{F{\o}lner sequence} for $G$ is a sequence $(\mathcal{F}_n)_n$ of finite subsets of $G$ so that for any $s\in G$ 
 \begin{align*}
   \frac{|\mathcal{F}_n\Delta s\mathcal{F}_n|}{|\mathcal{F}_n|} =2-2\frac{|\mathcal{F}_n\cap s\mathcal{F}_n|}{|\mathcal{F}_n|}\xrightarrow[n\to\infty]{} 0, 
 \end{align*}
We will call a F{\o}lner sequence \emph{summable} if there exists a decreasing sequence $(\varepsilon_n)\in \ell^1(\mathbb{N})^1_+$ so that for all $m>n\geq 0$
 \begin{align}
      \max_{g,h\in \mathcal{F}_{n}}
     \left(1-\frac{|\mathcal{F}_{m}\cap gh^{-1}\mathcal{F}_{m}|}{|\mathcal{F}_{m}|}\right)|\mathcal{F}_{n}|
     <\varepsilon_m.\label{eq: sum Fol}
 \end{align}
Given any F{\o}lner sequence we can find a summable subsequence.

Now, from a summable F{\o}lner sequence $(\mathcal{F}_n)_n$ for $G$ with respect to a decreasing sequence $(\varepsilon_n)_n\in \ell^1(\mathbb{N})_+^1$, we construct a summable system of c.p.c.\ approximations of $\Cstar_r(G)$ following \cite[Theorem 2.6.8]{BO}: For each $n\geq 1$, let $P_n\colon \ell^2(G)\to \ell^2(G)$ be the projection onto the span of $\{\delta_g\mid g\in \mathcal{F}_n\}$ and identify $P_nB(\ell^2(G))P_n$ with $M_{\mathcal{F}_n}(\C)$ with canonical matrix units $\{e_{g,h}\}_{g,h\in \mathcal{F}_n}$. Define $\psi_n\colon \Cstar_r(G)\to M_{\mathcal{F}_n}(\C)$ on $\lambda(G)\subset \Cstar_r(G)$ by 
\begin{align}
    \psi_n(\lambda_s)=P_n\lambda_s P_n=\sum_{r\in \mathcal{F}_n\cap s\mathcal{F}_n} e_{r,s^{-1}r}, \label{eq: psi Fol}
    \end{align}
   for $s\in G$, and $\varphi_n:M_{\mathcal{F}_n}(\C)\to \Cstar_r(G)$ on matrix units by
    \begin{align*}
        \varphi_n(e_{g,h})=\frac{1}{|\mathcal{F}_n|}\lambda_{gh^{-1}}
\end{align*}
for $g,h\in \mathcal{F}_n$. 
These maps are u.c.p., and the F{\o}lner condition guarantees that this is a system of c.p.c.\ approximations of $\Cstar_r(G)$ (see \cite[Theorem 2.6.8]{BO} for a proof). To see that the system is summable, we note that for each $s\in G$ and $m\geq 0$, we have  \begin{align*}
     \varphi_m(\psi_m(\lambda_s))=\frac{|\mathcal{F}_m\cap s\mathcal{F}_m|}{|\mathcal{F}_m|}\lambda_s,
 \end{align*}
 and we approximate for $m>n\geq 0$
\begin{align*}
    \|\varphi_n-\varphi_m\circ\psi_m\circ\varphi_n\|&\overset{\phantom{\eqref{eq: sum Fol}}}{\leq} |\mathcal{F}_n|^2 \max_{g,h\in \mathcal{F}_n} \|\varphi_n(e_{g,h})-\varphi_m\circ\psi_m\circ\varphi_n(e_{g,h})\|\\
    &\overset{\phantom{\eqref{eq: sum Fol}}}{=}|\mathcal{F}_n|^2 \max_{g,h\in \mathcal{F}_n} \left\|\left(1-\frac{|\mathcal{F}_m\cap gh^{-1}\mathcal{F}_m|}{|\mathcal{F}_m|}\right)\frac{\lambda_{gh^{-1}}}{|\mathcal{F}_n|}\right\|\\
    &\overset{\eqref{eq: sum Fol}}{<}\varepsilon_m.
\end{align*}
It follows that the system of approximations is summable and so by \cref{thm: main} the associated system $(\M_{\mathcal{F}_n},\psi_{n+1}\circ\varphi_n)_n$ is $\Cstar$-encoding, and the $\Cstar$-limit is isomorphic to $\Cstar_r(G)$. 

We indicated in the introduction that such systems are generally not NF or $\CPCstar$, and it remains to justify this assertion. Since our maps are all u.c.p., these two notions coincide, and so we focus our attention on NF systems and begin with the following proposition.

\begin{proposition}\label{prop: mult iff mult}
Suppose $A\xrightarrow{\psi_n} F_n \xrightarrow{\varphi_n} A$ is a system of summable c.p.c.\ approximations of a separable nuclear $\Cstar$-algebra. The associated system is asymptotically multiplicative if and only if the sequence $(\psi_n)_n$ is approximately multiplicative in the sense that $\lim_n\|\psi_n(ab)-\psi_n(a)\psi_n(b)\|=0$ for all $a,b\in A$. 
\end{proposition}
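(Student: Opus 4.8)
The plan is to prove both implications by relating the connecting maps $\rho_{m,n} = \psi_m \circ \varphi_{m-1} \circ \cdots \circ \varphi_n$ to the ``shortcut'' maps $\psi_m \circ \varphi_n$, using summability (specifically \eqref{eq: zigzag}), and then translating the asymptotic multiplicativity condition \eqref{eq: asy mult} into a statement purely about the $\psi_n$. The key bookkeeping device is the family of limit elements $a_x = \lim_n \varphi_n(\rho_{n,k}(x))$ from \eqref{ax}, together with \cref{lem: rho and psi prod}, which already says that $\rho_{m,n}(\rho_{n,k}(x)\rho_{n,k}(y))$ and $\rho_{m,k}(x)\rho_{m,k}(y)$ are both uniformly close (for large $m > n$) to $\psi_m(a_x a_y)$ and $\psi_m(a_x)\psi_m(a_y)$ respectively. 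So the associated system is asymptotically multiplicative precisely when $\|\psi_m(a_x a_y) - \psi_m(a_x)\psi_m(a_y)\| \to 0$ for all such $a_x, a_y$.

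For the ``if'' direction, assume $\lim_n \|\psi_n(ab) - \psi_n(a)\psi_n(b)\| = 0$ for all $a,b \in A$. Fix $k \geq 0$, $x,y \in F_k$, and $\varepsilon > 0$. By \cref{lem: rho and psi prod} choose $M_1$ so that for $m > n > M_1$ both $\|\rho_{m,n}(\rho_{n,k}(x)\rho_{n,k}(y)) - \psi_m(a_x a_y)\| < \varepsilon/3$ and $\|\rho_{m,k}(x)\rho_{m,k}(y) - \psi_m(a_x)\psi_m(a_y)\| < \varepsilon/3$. By approximate multiplicativity of $(\psi_n)_n$ applied to $a = a_x$, $b = a_y$, choose $M_2$ so that $\|\psi_m(a_x a_y) - \psi_m(a_x)\psi_m(a_y)\| < \varepsilon/3$ for $m > M_2$. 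A triangle inequality over $m > n > \max\{M_1, M_2\}$ then gives \eqref{eq: asy mult}.

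For the ``only if'' direction, assume the associated system is asymptotically multiplicative. I want $\lim_m \|\psi_m(ab) - \psi_m(a)\psi_m(b)\| = 0$ for all $a, b \in A$. Since each $\psi_m$ is contractive and the elements of the form $a_x$ (for $k \geq 0$, $x \in F_k$) are dense in $A$ — this density is exactly the content recalled from the proof of \cite[Proposition 3.4]{CW1}, and a standard $\varepsilon/3$-argument reduces the general case to this dense set — it suffices to show $\lim_m \|\psi_m(a_x a_y) - \psi_m(a_x)\psi_m(a_y)\| = 0$ for all $x,y$ in some $F_k$. Fix such $x, y$ and $\varepsilon > 0$. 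Combining \eqref{eq: asy mult} with \cref{lem: rho and psi prod} as above: for large $m > n$, $\|\psi_m(a_x a_y) - \rho_{m,n}(\rho_{n,k}(x)\rho_{n,k}(y))\| < \varepsilon/3$, $\|\rho_{m,n}(\rho_{n,k}(x)\rho_{n,k}(y)) - \rho_{m,k}(x)\rho_{m,k}(y)\| < \varepsilon/3$, and $\|\rho_{m,k}(x)\rho_{m,k}(y) - \psi_m(a_x)\psi_m(a_y)\| < \varepsilon/3$; chaining these yields $\|\psi_m(a_x a_y) - \psi_m(a_x)\psi_m(a_y)\| < \varepsilon$ for all sufficiently large $m$.

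The main subtlety — and the only place one must be slightly careful — is the reduction to the dense set in the ``only if'' direction: one needs that $\{a_x : k \geq 0,\ x \in F_k^1\}$ is (up to scalars) dense in $A$, and that approximate multiplicativity is stable under the $\varepsilon/3$-perturbation, which works because all the $\psi_m$ are uniformly contractive. Everything else is a direct combination of \cref{lem: rho and psi prod} with triangle inequalities; no new estimate is required beyond what summability already provides.
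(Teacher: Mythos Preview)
Your proof is correct and follows essentially the same strategy as the paper's: both directions are handled by pairing \cref{lem: rho and psi prod} with a triangle inequality, so that asymptotic multiplicativity of the associated system is equivalent to $\|\psi_m(a_xa_y)-\psi_m(a_x)\psi_m(a_y)\|\to 0$ for all $a_x,a_y$, and then one passes between these special elements and arbitrary $a,b\in A$ via density. The only difference is packaging: for the ``only if'' direction you argue directly, citing the density of $\{a_x\}$ from \cite[Proposition~3.4]{CW1}, whereas the paper argues by contrapositive and instead re-derives the needed density in the form $a=\lim_n a_{\psi_n(a)}$ (using summability and \eqref{eq: zigzag}); both routes amount to the same $\varepsilon/3$-reduction using uniform contractivity of the $\psi_m$.
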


\begin{proof}
First we assume the sequence $(\psi_n)_n$ is approximately multiplicative. Let $k\geq 0$, $x,y\in F_k$, and $\varepsilon>0$, and let $a_x,a_y\in A$ as in \eqref{ax}. \cref{lem: rho and psi prod} guarantees an $M>k$ so that \eqref{eq: mult eq} and \eqref{eq: mult eq'} hold for $\sfrac{\varepsilon}{4}$ for all $m>n>M$. By approximate multiplicativity, we can choose $N>M$ so that $\|\psi_m(a_xa_y)-\psi_m(a_x)\psi_m(a_y)\|<\sfrac{\varepsilon}{2}$ for all $m>N$. Then for all $m>n>N$, we have 
\begin{align*}
 \|\rho_{m,n}(\rho_{n,k}(x)\rho_{n,k}(y)) -\rho_{m,k}(x)\rho_{m,k}(y)\|&< \sfrac{\varepsilon}{2} + \|\psi_m(a_xa_y)-\psi_m(a_x)\psi_m(a_y)\|<\varepsilon.
\end{align*}
Hence the associated system $(F_n,\rho_{n+1,n})_n$ is asymptotically multiplicative. 

Now assume $(\psi_n)_n$ is not approximately multiplicative. 
By Remark \ref{rmk: summable}(i) there exist $k\geq 0$, $x,y\in F_k$ that witness this, i.e., there exist $k\geq 0$, $x,y\in F_k$, and $\varepsilon>0$ so that for any $n\in\mathbb{N}$ there exists an $m>n$ with $\|\psi_m(a_xa_y)-\psi_m(a_x)\psi_m(a_y)\|>\varepsilon$. 
\cref{lem: rho and psi prod} guarantees an $M>k$ so that \eqref{eq: mult eq} and \eqref{eq: mult eq'} hold for $\sfrac{\varepsilon}{4}$ for all $m>n>M$, which gives
\begin{align*}
    \|\rho_{m,n}(\rho_{n,k}(x)\rho_{n,k}(y)) - \rho_{m,k}(x)\rho_{m,k}(y)\|< \|\psi_m(a_xa_y)-\psi_m(a_x)\psi_m(a_y)\| + \sfrac{\varepsilon}{2}.
\end{align*}
 for all $m>n>M$. Now for any $n>M$, there exists an $m>n$ such that $\|\psi_m(a_xa_y)-\psi_m(a_x)\psi_m(a_y)\|>\varepsilon$, and so
\[\|\rho_{m,n}(\rho_{n,k}(x)\rho_{n,k}(y)) - \rho_{m,k}(x)\rho_{m,k}(y)\|>\sfrac{\varepsilon}{2}.\]
It follows that the system $(F_n,\rho_{n+1,n})_n$ is not asymptotically multiplicative. 
\end{proof}

Of course some amenable groups, such as finite groups, will admit a summable system of c.p.c.\ approximations from a F{\o}lner sequence as above with $(\psi_n)_n$ approximately multiplicative, but it turns out this will never hold for many groups. 
\begin{proposition}
Let $G$ be a countable discrete amenable group with summable F{\o}lner sequence $(\mathcal{F}_n)_n$, and let $\big(\Cstar_r(G)\xrightarrow{\psi_{n}}\M_{\mathcal{F}_{n}}\xrightarrow{\varphi_{n}}\Cstar_r(G)\big)_n$ be the summable approximation for $\Cstar_r(G)$ derived above. If $G$ contains an element with infinite order, then the maps $(\psi_{n})_n$ are not approximately multiplicative, and the associated $\Cstar$-encoding system is neither NF nor $\CPCstar$.
\end{proposition}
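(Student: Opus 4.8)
The plan is to use \cref{prop: mult iff mult} to reduce the statement to showing that the maps $(\psi_n)_n$ from \eqref{eq: psi Fol} fail to be approximately multiplicative, and then to exhibit a concrete pair of group elements witnessing this failure. Fix an element $s\in G$ of infinite order. I would test approximate multiplicativity on the pair $a=\lambda_s$ and $b=\lambda_s$ (or more cautiously $a=\lambda_s$, $b=\lambda_{s^{-1}}=\lambda_s^*$, so that the product $ab=1$ is especially transparent). The key computation is that, from \eqref{eq: psi Fol},
\[
\psi_n(\lambda_s)\psi_n(\lambda_{s^{-1}})=\Big(\sum_{r\in \mathcal{F}_n\cap s\mathcal{F}_n}e_{r,s^{-1}r}\Big)\Big(\sum_{q\in \mathcal{F}_n\cap s^{-1}\mathcal{F}_n}e_{q,sq}\Big)=\sum_{r\in \mathcal{F}_n\cap s\mathcal{F}_n\cap s^2\mathcal{F}_n}e_{r,r},
\]
after reindexing, which is the projection onto $\operatorname{span}\{\delta_r: r\in \mathcal{F}_n\cap s\mathcal{F}_n\cap s^2\mathcal{F}_n\}$, whereas $\psi_n(\lambda_s\lambda_{s^{-1}})=\psi_n(1)=P_n$, the projection onto $\operatorname{span}\{\delta_r:r\in\mathcal{F}_n\}$. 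The difference is the projection onto $\operatorname{span}\{\delta_r: r\in \mathcal{F}_n\setminus(s\mathcal{F}_n\cap s^2\mathcal{F}_n)\}$; this has norm $1$ precisely when that set is nonempty, i.e. when $\mathcal{F}_n\not\subset s\mathcal{F}_n\cap s^2\mathcal{F}_n$.

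So the crux is to show that for a F{\o}lner sequence of a group with an infinite-order element, one cannot have $\mathcal{F}_n\subseteq s\mathcal{F}_n$ for all large $n$ (for a suitably chosen $s$) — otherwise the $\|\psi_n(\lambda_s\lambda_{s^{-1}})-\psi_n(\lambda_s)\psi_n(\lambda_{s^{-1}})\|$ would collapse to $0$. Here is the argument: if $\mathcal{F}_n\subseteq s\mathcal{F}_n$ then, comparing cardinalities, $|\mathcal{F}_n|\le |s\mathcal{F}_n|=|\mathcal{F}_n|$ forces $\mathcal{F}_n=s\mathcal{F}_n$, hence $\mathcal{F}_n=s^k\mathcal{F}_n$ for all $k\in\Z$. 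Pick any $g\in\mathcal{F}_n$; then $s^k g\in\mathcal{F}_n$ for all $k\in\Z$, and since $s$ has infinite order these elements $s^kg$ are pairwise distinct, contradicting finiteness of $\mathcal{F}_n$. Thus $\mathcal{F}_n\setminus s\mathcal{F}_n\ne\emptyset$ for \emph{every} $n$, so a fortiori $\mathcal{F}_n\setminus(s\mathcal{F}_n\cap s^2\mathcal{F}_n)\ne\emptyset$, and therefore $\|\psi_n(\lambda_s\lambda_s^*)-\psi_n(\lambda_s)\psi_n(\lambda_s^*)\|=1$ for every $n$. Hence $(\psi_n)_n$ is not approximately multiplicative.

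By \cref{prop: mult iff mult}, the associated c.p.c.\ system $(\M_{\mathcal{F}_n},\psi_{n+1}\circ\varphi_n)_n$ is not asymptotically multiplicative, so it is not an NF system. Since all the maps $\psi_n,\varphi_n$ are u.c.p.\ (they are unital, being built from compressions by projections $P_n$ and from the embedding of matrix units scaled so that $\varphi_n(P_n)=1$), a $\CPCstar$-system here would in particular have to have $(\psi_{n+1}\circ\varphi_n)$ asymptotically order zero; but for unital maps order zero is equivalent to multiplicative, so being non-asymptotically-multiplicative also rules out $\CPCstar$. (This is exactly the reduction flagged in the paragraph preceding \cref{prop: mult iff mult}, so I would simply cite it rather than reprove it.) The only genuinely delicate point is the choice of test elements and the combinatorial lemma that $\mathcal{F}_n\ne s\mathcal{F}_n$ for infinite-order $s$; everything else is bookkeeping with the explicit formula \eqref{eq: psi Fol}. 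I would also remark that one should double-check the reindexing in the product $\psi_n(\lambda_s)\psi_n(\lambda_{s^{-1}})$ — that $e_{r,s^{-1}r}e_{q,sq}$ is nonzero iff $s^{-1}r=q$, i.e. $r=sq$, and then equals $e_{sq,sq}$, with $sq$ ranging over $s\mathcal{F}_n\cap s^2\mathcal{F}_n$ as $q$ ranges over $\mathcal{F}_n\cap s^{-1}\mathcal{F}_n$, intersected with the constraint $r=sq\in\mathcal{F}_n\cap s\mathcal{F}_n$ from the first sum — but this is routine.
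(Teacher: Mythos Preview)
Your approach is essentially the same as the paper's: test approximate multiplicativity on $\lambda_s$ and $\lambda_s^*$, show the defect is a nonzero projection (hence has norm $1$), and invoke \cref{prop: mult iff mult} together with the observation that u.c.p.\ order zero maps are multiplicative. The paper uses the pair $(\lambda_s^*,\lambda_s)$ rather than $(\lambda_s,\lambda_s^*)$, and states the combinatorial fact $|\mathcal{F}_n\cap s\mathcal{F}_n|<|\mathcal{F}_n|$ without the detailed justification you provide, but the argument is otherwise identical.

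One small slip to fix in your reindexing: when $q=s^{-1}r$ ranges over $\mathcal{F}_n\cap s^{-1}\mathcal{F}_n$, the condition is $s^{-1}r\in\mathcal{F}_n$ and $s^{-1}r\in s^{-1}\mathcal{F}_n$, i.e.\ $r\in s\mathcal{F}_n\cap\mathcal{F}_n$, not $s\mathcal{F}_n\cap s^2\mathcal{F}_n$. So in fact $\psi_n(\lambda_s)\psi_n(\lambda_{s^{-1}})=\sum_{r\in\mathcal{F}_n\cap s\mathcal{F}_n}e_{r,r}$, and the difference is the projection onto $\operatorname{span}\{\delta_r:r\in\mathcal{F}_n\setminus s\mathcal{F}_n\}$. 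This does not affect your conclusion, since your combinatorial argument already establishes $\mathcal{F}_n\setminus s\mathcal{F}_n\neq\emptyset$ directly.
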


\begin{proof}
Let $s\in G$ with infinite order. It follows in particular that $|\mathcal{F}_n\cap s\mathcal{F}_n|<|\mathcal{F}_n|$ for all $n\geq 0$. 
Then using \eqref{eq: psi Fol} we compute for any $n\geq 0$
\begin{align*}
    \|\psi_n(\lambda_s^*\lambda_s)-\psi_n(\lambda_s^*)\psi_n(\lambda_s)\|
    &=\left\|1_{\M_{\mathcal{F}_n}}-\left(\sum_{r\in \mathcal{F}_n\cap s\mathcal{F}_n} e_{s^{-1}r,r}\right)\left(\sum_{r\in \mathcal{F}_n\cap s\mathcal{F}_n} e_{r,s^{-1}r}\right)\right\|\\
    &=\left\|1_{\M_{\mathcal{F}_n}}-\sum_{r\in \mathcal{F}_n\cap s\mathcal{F}_n} e_{s^{-1}r,s^{-1}r}\right\|
    \\&=1.
\end{align*}
It follows from \cref{prop: mult iff mult} that the associated $\Cstar$-encoding system \\ $(\M_{\mathcal{F}_n},\psi_{n+1}\circ\varphi_n)_n$ is not NF, and since the maps $\rho_{n+1,n}=\psi_{n+1}\circ\varphi_n$ are all u.c.p., the system is also not $\CPCstar$. 
\end{proof}


\bibliographystyle{plain}
\makeatletter\renewcommand\@biblabel[1]{[#1]}\makeatother
\bibliography{References2}

\end{document}